\title{Local Langlands correspondence for Asai L functions and $\epsilon$-factors}
\author{Daniel Shankman}
\date{October 23, 2018}
\DeclareMathOperator{\Ker}{Ker}
\DeclareMathOperator{\GL}{GL}
\DeclareMathOperator{\Ind}{Ind}
\DeclareMathOperator{\Gal}{Gal}
\DeclareMathOperator{\der}{der}
\DeclareMathOperator{\Res}{Res}
\begin{document}

\maketitle
\section{Introduction}

\subsection{Local Langlands correspondence for general reductive groups}

Let $k$ be a local field of characteristic zero, and $W_k'$ the Weil-Deligne group of $k$.  To each finite dimensional, complex, Frobenius semisimple representation $\rho$ of $W_k'$, and each character $\psi$ of $k$, there is an associated Artin L-function $L(s,\rho)$ and $\epsilon$-factor $\epsilon(s,\rho,\psi)$, both meromorphic functions of the complex variable $s$ [Ta].  There is also the gamma factor

\[ \gamma(s,\rho,\psi) = \frac{L(1-s,\rho^{\vee})\epsilon(s,\rho,\psi)}{L(s,\rho)}\]

where $\rho^{\vee}$ is the contragredient of $\rho$.  

Let $\mathbf G$ be a connected, reductive group over $k$.  To each irreducible, admissible representation $\pi$ of $\mathbf G(k)$, each continuous, finite dimensional complex representation $r$ of the L-group $^L\mathbf G$ of $\mathbf G$ whose restriction to the connected component of $^L\mathbf G$ is complex analytic, and each character $\psi$ of $k$, there are associated a conjectural L-function $L(s,\pi,r)$ and epsilon factor $\epsilon(s,\pi,r,\psi)$.  The conjectural gamma factor $\gamma(s,\pi,r,\psi)$ is defined by
\[ \gamma(s,\pi,r,\psi) = \frac{L(1-s,\pi^{\vee},r)\epsilon(s,\pi,r,\psi)}{L(s,\pi,r)} \]
These factors are defined in many special cases, in particular by the Langlands-Shahidi method.  

The conjectural local Langlands correspondence (LLC) predicts the following:

\begin{enumerate}
    \item A partition of the classes of irreducible, admissible  representations of $\mathbf G(k)$ into finite sets, called L-packets.
    
    \item A bijection from the set of L-packets to the set of classes of admissible homomorphisms of $W_k'$ into $^L\mathbf G$ (8.2 of [Bo]).
    
    \item For each representation $r$ of $^L\mathbf G$, an equality of L and epsilon factors
    
    \[ L(s,\pi,r) = L(s,r \circ \rho) \] \[\epsilon(s,\pi,r,\psi) = \epsilon(s,r \circ \rho,\psi) \]
    whenever $\pi$ is an element of an L-packet corresponding to $\rho$, and whenever the left hand sides can be defined.
\end{enumerate}

Parts 1 and 2 of the LLC are notably established for archimedean groups [Kn], tori [Yu], and the general linear group [He1].  For archimedean groups, the left hand sides of part 3 are defined as the right hand sides.  For $\GL_n$, part 3 is established for the standard representation [He1], and for the symmetric and exterior square representations [CoShTs].

We remark that whenever the partition and bijection of parts 1 and 2 are established for $\mathbf G$, they are also established for the group $\Res_{k/k_0} \mathbf G$, where $k_0$ is a local field contained in $k$ (8.4 of [Bo]).  This procedure is compatible with the existing correspondence for archimedean groups and for tori.

\subsection{The Asai representation}

Let $E/F$ be a quadratic extension of local fields.  Let $\mathbf M$ be the group $\Res_{E/F} \GL_n$ obtained by Weil restriction of scalars.  Then $\mathbf M$ is a connected, reductive group over $F$, with $\mathbf M(F) = \GL_n(E)$.  The L-group $^L\mathbf M$ can be identified with the semidirect product of $\GL(V) \times \GL(V)$ by $\Gal(E/F)$, where $V$ is an $n$-dimensional complex vector space, and $\Gal(E/F)$ acts  by $\sigma.(T,S) = (S,T)$, where $\sigma$ is the nontrivial element of $\Gal(E/F)$.  We define the Asai representation $\mathscr R:\, ^L\mathbf M \rightarrow \GL(V \otimes V)$ by 
\[ \mathscr R(T,S) = T \otimes S \]
\[ \mathscr R(\sigma)(v \otimes v') = v' \otimes v\]
Now, let $\pi$ be an irreducible, admissible representation of $\GL_n(E)$, corresponding to the Frobenius semisimple representation $\rho: W_E' \rightarrow \GL(V)$ under the local Langlands correspondence.  As explained in 8.4 of [Bo], this corresponds to an admissible homomorphism
\[ \underline{\rho}: W_F' \rightarrow \, ^L\mathbf M \]
which we can explicitly describe as follows: identifying $W_E'$ as a subgroup of $W_F'$, choose a $z \in W_F'$ which is not in $W_E'$.  Then
\[ \underline{\rho}(a) = \begin{cases} (\rho(a),\rho(zaz^{-1}, 1_E) & \textrm{ if $a \in W_E'$} \\ (\rho(az^{-1},za,\sigma) & \textrm{ if $a \not\in W_E'$} \end{cases} \]
Thus $\pi \leftrightarrow \underline{\rho}$ is the local Langlands correspondence for $\mathbf M$.  Our main theorem, Theorem 1, is that the equality of L and epsilon factors holds for the Asai representation.

\newtheorem{Theorem1}{Theorem}

\begin{Theorem1} If $\pi$ is an irreducible, admissible representation of $\GL_n(E)$, and $\rho$ is the $n$-dimensional Frobenius semisimple representation of $W_E'$ corresponding to $\pi$, then
\[ L(s,\pi, \mathscr R) = L(s, \mathscr R \circ \underline{\rho})\] \[ \epsilon(s,\pi,\mathscr R,\psi) = \epsilon(s, \mathscr R \circ \underline{\rho},\psi)\]
\end{Theorem1}

The Asai L-function $L(s,\pi, \mathscr R)$ and epsilon factor $\epsilon(s,\pi, \mathscr R,\psi)$ can be defined using the Langlands-Shahidi method [Go].  Granting the local Langlands correspondence for archimedean groups, and its compatability with Weil restriction of scalars, Theorem 1 holds trivially in the archimedean case, because the left hand sides are equal to the right hand sides, by definition.  Thus our main interest in Theorem 1 is in the $p$-adic case.  

Asai L-functions were originally considered by T. Asai in [As].  He considered the case of an real quadratic extension $K$ of $\mathbb Q$, and associated an L-function $L(f,s)$ to a Hilbert modular form $f$ of $K/\mathbb Q$.  The Asai L-function defined above has a factorization over the places of $\mathbb Q$.  The local factor of $L(f,s)$ at the rational primes $p$ which do not split in $K$ is of the type defined above.

\section{Equality of Asai gamma factors}

Theorem 1 obviously implies the equality of the gamma factors on both sides.  However, as Henniart shows in Section 5 of [He2], the converse is also true in the $p$-adic case: if we have the equality of the Asai gamma factors, even up to a root of unity, then we can deduce the equality of the L-factors. Consequently, if we prove the equality of the gamma factors on both sides, we will have the equality of the L-functions, and then finally the equality of the $\epsilon$-factors.  Thus Theorem 1 is implied by

\newtheorem{Theorem2}[Theorem1]{Theorem}

\begin{Theorem2} If $\pi$ is an irreducible, admissible representation of $\GL_n(E)$, and $\rho$ is the $n$-dimensional Frobenius semisimple representation of $W_E'$ corresponding to $\pi$, then
\[ \gamma(s,\pi,\mathscr R,\psi) = \gamma(s, \mathscr R \circ \underline{\rho},\psi) \]
\end{Theorem2}

In fact, Henniart has already proved that the gamma factors were equal up to a root of unity using a base change argument, and so the equality of L-factors is already established.  Our method, which is similar to that of [CoShTs], gives us the exact equality of the gamma factor.

In the archimedean case, Theorem 1 and hence Theorem 2 already holds.  In the nonarchimedean case, we note that Theorem 2 holds if $\pi$ has an Iwahori fixed vector.  This follows from Proposition 8.2.7 of [Sh2] and the fact that the LLC for tori is compatible with Weil restriction of scalars.

\subsection{Tensor induction}
Let $G$ be a group, and let $H$ be an index two subgroup of $G$.  Let $z$ be a fixed element of $G$ which is not in $H$.  If $(\rho,V)$ is a finite dimensional, complex representation of $H$, define a representation of $G$ with underlying space $V \otimes V$ by
\[ g.(v\otimes v') = \begin{cases} \rho(g)v \otimes \rho(zgz^{-1})v & \textrm{ if $g \in H$} \\ \rho(gz^{-1})v' \otimes \rho(zg)v & \textrm{ if $g \not\in H$} \end{cases} \]
We call this the representation of $G$ obtained from $H$ by \emph{tensor induction}, and denote it by $\otimes \Ind_H^G \rho$.  Up to isomorphism, it does not depend on the choice of $z$.  Note that if we replace the tensor product by the direct sum, then the same definition produces the induced representation $\Ind_H^G \rho$.   

By direct calculation, we see that if $\rho$ is an $n$-dimensional representation of $W_E'$, then the representation $\mathscr R \circ \underline{\rho}$ of $W_F'$ (Section 1.2) is equal to $\otimes \Ind_{W_E'}^{W_F'} \rho$.  Thus we may restate Theorem 2 as 

\newtheorem*{Theorem2'}{Theorem 2$'$}

\begin{Theorem2'} If $\pi$ is an irreducible, admissible representation of $\GL_n(E)$, and $\rho$ is the $n$-dimensional Frobenius semisimple representation of $W_E'$ corresponding to $\pi$, then
\[ \gamma(s,\pi,\mathscr R,\psi) = \gamma(s, \otimes \Ind_{E/F}\rho,\psi) \]

where $\otimes \Ind_{E/F}\rho = \otimes \Ind_{W_E'}^{W_F'}\rho$.
\end{Theorem2'}

We will need to know how tensor induction affects a direct sum of representations.

\newtheorem{A1}{Lemma}[subsection]

\begin{A1}Let $(\rho_1,V)$ and $(\rho_2,W)$ be representations of $H$.  Define a representation $\delta$ of $G$ with underlying space $(V \otimes W) \oplus (V \otimes W)$ by

\[ \delta(g).(v\otimes w, v' \otimes w') = \begin{cases} (\rho_1(g)v \otimes \rho_2(zgz^{-1})w, \rho_1(zgz^{-1})v' \otimes \rho_2(g)w') & \textrm{ if $g \in H $} \\
(\rho_1(gz^{-1})v' \otimes \rho_2(zg)w', \rho_1(zg)v \otimes \rho_2(gz^{-1})w) & \textrm{ if $g \not\in H$} \end{cases} \]

Then $\delta \cong \Ind_H^G \rho_1 \otimes (\rho_2 \circ \iota_z)$, where $\iota_z$ denotes conjugation by $z$ ($g \mapsto zgz^{-1}$).  \end{A1}

\begin{proof}Consider the restriction of $\delta$ to $H$.  Then $\delta|_H = (\rho_1 \otimes (\rho_2 \circ \iota_z)) \oplus ((\rho_1 \circ \iota_z) \otimes \rho_2)$.  Since $z^2 \in H$, we have $\rho \circ \iota_{z^2} \cong \rho$ for any representation $\rho$ of $H$, and therefore

\begin{equation*}
    \begin{split}
        (\rho_1 \circ \iota_z) \otimes \rho_2 & \cong ((\rho_1 \circ \iota_z) \otimes \rho_2) \circ \iota_{z^2} \\
        & = ((\rho_1 \circ \iota_z) \otimes \rho_2) \circ \iota_z \circ \iota_z \\
        & = ((\rho_1 \circ \iota_{z^2}) \otimes (\rho_2 \circ \iota_z)) \circ \iota_z \\
        & \cong (\rho_1 \otimes (\rho_2 \circ \iota_z)) \circ \iota_z
    \end{split}
\end{equation*}

This implies that 

\[ \delta|_H \cong (\rho_1 \otimes \rho_2 \circ \iota_z) \oplus (\rho_1 \otimes \rho_2 \circ \iota_z) \circ \iota_z \]

which is exactly the restriction of $\Ind_H^G\rho_1 \otimes (\rho_2 \circ \iota_z)$ to $H$ (given in the remark after the definition of tensor induction above).  It is straightforward to verify that the composition of these isomorphisms actually intertwines the action of $G$, not just $H$.  

\end{proof}

If $V$ is a complex vector space which is equal to a direct sum $V_1 \oplus \cdots \oplus V_r$, then $V \otimes V$ is equal to a direct sum

\[ [\bigoplus\limits_{i=1}^r V_i \otimes V_i] \oplus [ \bigoplus\limits_{1 \leq i < j \leq r} (V_i \otimes V_j) \oplus (V_i \otimes V_j)] \]

The same procedure allows us to decompose a representation obtained by tensor induction:

\newtheorem{A2}[A1]{Lemma}

\begin{A2} Suppose that $(\rho,V)$ is a representation of $H$ which decomposes as a direct sum of subrepresentations $(\rho_1,V_1) \oplus \cdots \oplus (\rho_r, V_r)$.  Then $(\otimes \Ind_H^G \rho, V \otimes V)$ decomposes as a direct sum of subrepresentations

\[ \otimes \Ind_H^G \rho = \bigoplus\limits_{i=1}^r \otimes \Ind_H^G \rho_i \oplus \bigoplus\limits_{1 \leq i < j \leq r} \Ind_H^G \rho_i \otimes (\rho_j \circ \iota_z) \]

\end{A2}

\begin{proof} This follows from direct computation and applying Lemma 2.1.1.  \end{proof}

If the representation $\rho$ is not semisimple, one can replace the direct sum by a composition series and obtain a similar result.  The proof is similar to Lemma 2.1.2.

\newtheorem{A3}[A1]{Lemma}

\begin{A3}Suppose that $(\rho,V)$ is a finite dimensional representation of $H$.  Let $(V_1,\rho_1)$, ... , $(V_r,\rho_r)$ be the composition factors of a composition series of $\rho$.  There is a sequence of subrepresentations of $\otimes \Ind_H^G \rho$:

\[ 0 = L_0 \subset L_1 \subseteq \cdots \subset V \otimes V \]

for which the following representations show up as the quotients $L_i/L_{i+1}$:

\[ \otimes \Ind_H^G \rho_i : 1 \leq i \leq r \] 

\[ \Ind_H^G \rho_i \otimes (\rho_j \circ \iota_z) : 1 \leq i < j \leq r \]  \end{A3}

Suppose that $\eta$ is a character of $H$.  Then $\otimes \Ind_H^G \eta$ is a character of $G$, and it is immediate that tensor induction commutes with twisting by characters: 
\[(\otimes \Ind_H^G \rho)(\otimes \Ind_H^G \eta) = \otimes \Ind_H^G (\rho \eta) \]

\newtheorem{A4}[A1]{Lemma}

\begin{A4} Let $E/F$ be a quadratic extension of $p$-adic fields.  Let $\rho$ be a character of the Weil group $W_E$.

(i): If $\rho$ is an unramified character of $W_E$, then $\otimes \Ind_{E/F} \rho$ is an unramified character of $W_F$.  More specifically, if $|| \cdot ||$ is the Weil group norm, and $\rho = || \cdot ||^{s_0}$ for a complex number $s_0$, then $\otimes \Ind_{E/F} \rho = || \cdot ||^{2s_0}$.

(ii): The character $\otimes \Ind_{E/F} \rho$ can be made highly ramified by choosing $\rho$ to be highly ramified.

\end{A4}

\begin{proof} (i) is a straightforward computation, and (ii) is a short analytic argument which can be seen by identifying $W_F^{\operatorname{ab}}$ and $W_E^{\operatorname{ab}}$ with $F^{\ast}$ and $E^{\ast}$ via local class field theory.  One uses the fact that the homomorphism $W_E^{\operatorname{ab}} \rightarrow W_F^{\operatorname{ab}}$ coming from the inclusion map $W_E \subset W_F$ identifies with the norm $E^{\ast} \rightarrow F^{\ast}$.  
\end{proof}

\subsection{Tensor induction for the global Weil group}

Let $K/k$ be a quadratic extension of number fields.  Fix an algebraic closure $\overline{k}$ of $k$ containing $K$, and a global Weil group $\varphi: W_k \rightarrow \Gal(\overline{k}/k)$.  For each place $v$ of $k$, fix an algebraic closure $\overline{k_v}$ of the completion $k_v$, and an embedding $i_v: \overline{k} \rightarrow \overline{k_v}$ over $k$, which induces an injection $\Gal(\overline{k_v}/k_v) \rightarrow \Gal(\overline{k}/k)$ defined by $\tau \mapsto i_v^{-1} \circ \tau \circ i_v$.

For each place $v$ of $k$, we have a local Weil group $W_{k_v} \rightarrow \Gal(\overline{k_v}/k_v)$, related to the global Weil group by the following lemma:

\newtheorem{B1}{Lemma}[subsection]

\begin{B1}There exists a continuous homomorphism $\theta_v: W_{k_v} \rightarrow W_k$, unique up to inner isomorphism by an element of $\Ker \varphi$, such that the diagram

\[ \begin{tikzcd} W_k \arrow[r,"\varphi"] & \Gal(\overline{k}/k) \\ W_{k_v} \arrow[r]  \arrow[u,"\theta_v"]& \Gal(\overline{k_v}/k_v) \arrow[u,] \end{tikzcd} \]

is commutative.  \end{B1}

\begin{proof} Proposition 1.6.1 of [Ta].
\end{proof}

We will carry over Lemma 2.2.1 in a compatible manner to the field $K$.  We take the global Weil group $W_K$ for $K$ to be the preimage under $\varphi$ of $\Gal(\overline{k}/K)$.  For each place $v$ of $k$, the embedding $i_v: \overline{k} \rightarrow \overline{k_v}$ determines a place $w$ of $K$ lying over $v$: the completion $K \rightarrow K_w$ is obtained by taking $K_w$ to be the composite field $i_v(K)k_v$.

For each place $v$ of $k$, and the determined place $w$ of $K$ lying over $v$, we may take the local Weil group $W_{K_w}$ to be the preimage of $\Gal(\overline{k_v}/K_w)$ under $W_{k_v} \rightarrow \Gal(\overline{k_v}/k_v)$.  Then we have:  

\newtheorem{B2}[B1]{Lemma}

\begin{B2} Let $w$ be the place of $K$ determined by a place $v$ of $k$.  Take $\theta_w$ be the restriction of $\theta_v$ to $W_{K_w}$.  Then $\theta_w: W_{K_w} \rightarrow W_K$ is the continuous homomorphism, unique up to inner isomorphism by an element of $\Ker \varphi|_{W_K}$, such that the diagram

\[\begin{tikzcd} W_K \arrow[r,"\varphi"] & \Gal(\overline{k}/K) \\ W_{K_{w}} \arrow[r]  \arrow[u,"\theta_{w}"]& \Gal(\overline{k_v}/K_{w}) \arrow[u,] \end{tikzcd} \] 
commutes.  
\end{B2}

Next, choose once and for all an element $Z$ in $W_k$ which is not in $W_K$, and let $\tilde{\sigma}$ be the image of $Z$ in $\Gal(\overline{k}/k)$.  

Suppose that $w$ is not the only place of $K$ which lies over a given place $v$.  Let $w'$ be the other one.  Then the map $i_{w'} = i_v \circ \tilde{\sigma}^{-1}: \overline{k} \rightarrow \overline{k_v}$ gives an embedding of algebraic closures of $K, K_{w'}$ respectively, through which we obtain a homomorphism $\Gal(\overline{k_v}/K_{w'}) \rightarrow \Gal(\overline{k}/K)$.    

Then we may take $\theta_{w'}: W_{K_{w'}} \rightarrow W_K$ to be the homomorphism given by $\theta_{w'}(x) = Z \theta_w(x)Z^{-1}$.  We see immediately that the diagram

\[ \begin{tikzcd} W_K \arrow[r,"\varphi"] & \Gal(\overline{k}/K) \\ W_{K_{w'}} \arrow[r]  \arrow[u,"\theta_{w'}"]& \Gal(\overline{k_v}/K_{w_1}) \arrow[u,] \end{tikzcd} \]

is commutative.  Of course, under our identifications, $K_w = K_{w_1} = k_v$, and $W_{K_w} = W_{K_{w_1}} = W_{k_v}$.

The point of doing all of this is that in terms of the fixed homomorphisms $\theta_v : v \textrm{ a place of } k$, and the choice of $Z \in W_k - W_K$, we now have for \emph{every} place of $K$ a homomorphism $\theta_w: W_{K_w} \rightarrow W_K$ as in Lemma 2.2.2.  

Finally, suppose that $(\Sigma,V)$ is a continuous, finite dimensional, complex representation of $W_K$.  We will consider the representation $\otimes \Ind_{K/k} \Sigma$ of $W_k$ obtained from $\Sigma$ by tensor induction. 

\bigskip 

If $F$ is a function on $W_K$ and $w$ is a place of $K$, we define $F_w$ to be $F \circ \theta_w$, the ``restriction of $F$ to $W_{K_w}$.''  The same for a function on $W_k$ and a place of $k$.  

\newtheorem{B3}[B1]{Proposition}

\begin{B3} Let $v$ be a place of $k$, and let $w$ be the place of $K$ lying over $v$ which is determined by the embedding $i_v$.  

(i): If $w$ is the only place of $k$ lying over $v$, then
\[( \otimes \Ind_{K/k} \Sigma)_v = \otimes \Ind_{K_w/k_v} \Sigma_w \]

(ii): If there is another place $w'$ of $K$ lying over $v$, then 
\[( \otimes \Ind_{K/k} \Sigma)_v = \Sigma_w \otimes \Sigma_{w'} \]
\end{B3}

\begin{proof} (i): If $a \in W_{k_v}$, then $\theta_v(a)$ lies in $W_K$ if and only if $a$ is in $W_{K_w}$.  Consequently, $(\otimes \Ind_{K/k} \Sigma)_v$ is given for $a \in W_{k_v}$ by

\[ a.( v\otimes v') = \begin{cases} \Sigma(\theta_w(a))v \otimes \Sigma(Z\theta_w(a)Z^{-1})v' & \textrm{ if $a \in W_{K_w}$} \\
\Sigma(\theta_v(a)Z^{-1})v' \otimes \Sigma(Z\theta_v(a))v & \textrm{ if $a \not\in W_{K_w}$}\end{cases} \]

Let $z$ be any element of $W_{k_v}$ which is not in $W_{K_w}$.  Then $\theta_v(z)$ is an element of $W_k$ which is not in $W_K$.  Consequently, $\theta_v(z)Z^{-1}$ lies in $W_K$, and the map $v \otimes v' \mapsto v \otimes \Sigma(\theta_v(z)Z^{-1})v'$ defines an isomorphism

\[ (\otimes \Ind_{K/k}\Sigma)_v \xrightarrow{\cong} \otimes \Ind_{K_w/k_v} \Sigma_w \]

(ii): The image of $\theta_v$ is contained in $W_K$, and the representation $(\otimes \Ind_{K/k} \Sigma)_v$ of $W_{k_v}$ is given by

\[ a.( v \otimes v') = \Sigma(\theta_v(a)) v \otimes \Sigma(Z \theta_v(a)Z^{-1})v' \]

for all $a \in W_{k_v}$.  But recall that under our identifications, $W_{k_v} = W_{K_w} = W_{K_{w'}},$ $\theta_v = \theta_w$, and $\iota_Z \circ \theta_v = \theta_{w'}$, where $\iota_Z$ denotes conjugation by $Z$.  Therefore, we have

\[ (\otimes \Ind_{K/k}\Sigma)_v = \Sigma_w \otimes \Sigma_{w'} \]

whenever there are two distinct places $w$ and $w'$ lying over $v$.

\end{proof}

\subsection{Multiplicativity of the analytic gamma factor}

Let $\mathbf G$ be a quasi-split group over a characteristic zero local field $k$, and let $\pi$ be an irreducible, admissible representation of $\mathbf G(k)$.  Let $r: \, ^L\mathbf G \rightarrow \GL(V)$ be an irreducible representation for which $L(s,\pi,r)$ and $\epsilon(s,\pi,r)$ can be defined by the Langlands-Shahidi method.  We will call such a representation an \emph{LS-representation}.  

The principle of multiplicativity, which is crucial to the Langlands-Shahidi method, can be stated in the following way.  Suppose that $\mathbf P$ is a parabolic $k$-subgroup of $\mathbf G$ with Levi factor $\mathbf G_{\ast}$, and $\pi$ is isomorphic to a subrepresentation of $I_{\mathbf G_{\ast}}^{\mathbf G} \pi_{\ast}$, where $\pi_{\ast}$ is an irreducible, admissible representation of $\mathbf G_{\ast}(k)$, and $I_{\mathbf G_{\ast}}^{\mathbf G}$ denotes normalized parabolic induction.  Let $r_1, ... , r_t$ be the irreducible constituents of the restriction of $r$ to $^L\mathbf G_{\ast}$.  Then each $r_i$ is an LS-representation, and we have
\[ \gamma(s,\pi,r,\psi) = \prod\limits_{i=1}^t \gamma(s,\pi_{\ast},r_i,\psi) \]

\newtheorem{C}{Proposition}[subsection]

\begin{C} Let $\pi$ be a smooth, irreducible representation of $\GL_n(E)$, and suppose $\pi$ is isomorphic to a subrepresentation of $\Ind^{\GL_n(E)} \pi_1 \boxtimes \cdots \boxtimes \pi_r$, where $\pi_i$ is a smooth, irreducible representation of $\GL_{n_i}(E)$.  Then $\gamma(s,\pi, \mathscr R, \psi)$ is equal to 

\[ \prod\limits_{i=1}^r \gamma(s,\pi_i, \mathscr R, \psi) \prod\limits_{1 \leq i < j \leq r} \lambda(E/F,\psi)^{n_in_j} \gamma(s, \pi_i \times (\pi_j \circ \sigma), \psi \circ  \operatorname{Tr}_{E/F}) \]

where $\lambda(E/F,\psi)$ is the Langlands lambda function (8.2 of [Sh2]).
\end{C}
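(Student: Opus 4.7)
The plan is to invoke the multiplicativity principle of the Langlands--Shahidi method stated just before the proposition. Let $\mathbf{M} = \Res_{E/F}\GL_n$, and let $\mathbf{M}_\ast = \Res_{E/F}(\GL_{n_1} \times \cdots \times \GL_{n_r})$ be the Levi $F$-subgroup of the parabolic corresponding to the given induction, so $\mathbf{M}_\ast(F) = \GL_{n_1}(E) \times \cdots \times \GL_{n_r}(E)$ and we may take $\pi_\ast = \pi_1 \boxtimes \cdots \boxtimes \pi_r$. Multiplicativity then yields
\[
\gamma(s,\pi,\mathscr{R},\psi) \;=\; \prod_k \gamma(s,\pi_\ast,r_k,\psi),
\]
where the $r_k$ are the irreducible constituents of $\mathscr{R}|_{{}^L\mathbf{M}_\ast}$. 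It therefore suffices to decompose this restriction and to identify each of the resulting analytic gamma factors.

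The L-group ${}^L\mathbf{M}_\ast$ identifies with $(\GL_{n_1}(\mathbb{C}) \times \cdots \times \GL_{n_r}(\mathbb{C}))^2 \rtimes \Gal(E/F)$, with $\sigma$ swapping the two factors. Writing $V = V_1 \oplus \cdots \oplus V_r$, the decomposition
\[
V \otimes V \;=\; \bigoplus_{i=1}^r (V_i \otimes V_i) \;\oplus\; \bigoplus_{1 \leq i < j \leq r}\bigl((V_i \otimes V_j) \oplus (V_j \otimes V_i)\bigr)
\]
is a decomposition into ${}^L\mathbf{M}_\ast$-stable subspaces, mirroring the computation of Lemma 2.1.2. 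The diagonal summand $V_i \otimes V_i$, pulled back through the obvious projection ${}^L\mathbf{M}_\ast \to {}^L\Res_{E/F}\GL_{n_i}$, is precisely the Asai representation of $\Res_{E/F}\GL_{n_i}$, contributing the factor $\gamma(s,\pi_i,\mathscr{R},\psi)$. For each $i < j$, the off-diagonal summand factors through the projection ${}^L\mathbf{M}_\ast \to {}^L\Res_{E/F}(\GL_{n_i} \times \GL_{n_j})$ and, by Lemma 2.1.1 applied to the index-two subgroup $H = (\GL_{n_i}(\mathbb{C}) \times \GL_{n_j}(\mathbb{C}))^2$, it is isomorphic to the induced representation $r_{ij} = \Ind_H^G(\rho_i \otimes (\rho_j \circ \iota_\sigma))$, where $\rho_i,\rho_j$ are the standard representations of the first copies of $\GL_{n_i}, \GL_{n_j}$.

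The remaining step is to identify the analytic gamma factor $\gamma(s,\pi_i \boxtimes \pi_j, r_{ij},\psi)$ with $\lambda(E/F,\psi)^{n_i n_j}\,\gamma(s,\pi_i \times (\pi_j \circ \sigma),\psi \circ \Tr_{E/F})$. This is the analytic counterpart, for the Langlands--Shahidi construction, of the Weil group identity
\[
\gamma(s,\Ind_{W_E'}^{W_F'}\tau,\psi) \;=\; \lambda(E/F,\psi)^{\dim \tau}\,\gamma(s,\tau,\psi \circ \Tr_{E/F})
\]
applied to $\tau = \rho_i \otimes (\rho_j \circ \iota_\sigma)$. It reflects the fact that the Whittaker and intertwining integrals over $F$ that define the LS gamma factor for the induced L-group representation $r_{ij}$ reduce, after the change of additive character $\psi \mapsto \psi \circ \Tr_{E/F}$, to those defining the Rankin--Selberg gamma factor of $\pi_i \times (\pi_j \circ \sigma)$ over $E$, up to the Langlands constant $\lambda(E/F,\psi)^{n_i n_j}$; compare Section~8.2 of [Sh2]. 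Combining the diagonal and off-diagonal contributions assembles the claimed product.

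The main obstacle is precisely this last identification. Matching the analytic LS gamma factor of the induced constituent $r_{ij}$ with the Rankin--Selberg gamma factor over $E$ times the correct power of $\lambda(E/F,\psi)$ is a Weil-restriction statement for the LS construction; it is not entirely formal, and either requires a direct unraveling of the Whittaker-integral definitions under change of base field, or can be handled more indirectly by reducing to the unramified (or essentially tempered, highly ramified) case via a suitable globalization and appealing to the uniqueness of the gamma factor satisfying the standard list of axioms.
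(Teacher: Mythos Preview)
Your overall strategy matches the paper's: invoke Langlands--Shahidi multiplicativity, decompose $\mathscr R|_{{}^L\mathbf M_\ast}$ into the diagonal Asai pieces and the off-diagonal induced pieces (exactly the computation of Lemmas~2.1.1--2.1.2 on the dual side), and then identify each resulting gamma factor. The paper in fact cites Goldberg [Go] for this decomposition and for multiplicativity up to a root of unity, so your outline is on target.

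Where you hedge---the ``main obstacle'' of identifying $\gamma(s,\pi_i\boxtimes\pi_j,r_{ij},\psi)$ with $\lambda(E/F,\psi)^{n_in_j}\gamma(s,\pi_i\times(\pi_j\circ\sigma),\psi\circ\Tr_{E/F})$---the paper gives a clean and direct argument rather than an axiomatic or global one. The point is that the Shahidi local coefficient $C_\chi(s,\pi)$ is \emph{literally the same} whether it is computed for $\mathbf M\subset\mathbf G$ over $E$ or for $\Res_{E/F}\mathbf M\subset\Res_{E/F}\mathbf G$ over $F$: the groups of rational points, the generic character (with respect to $\psi$ on one side and $\psi\circ\Tr_{E/F}$ on the other), the Weyl element, and the intertwining integral are all identical. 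Shahidi's Theorem~8.3.2 then expresses this common local coefficient as $\gamma(s,\pi,r^\vee,\overline\psi)$ on the $E$-side and as $\lambda(E/F,\psi)^{\dim\mathbf N}\gamma(s,\pi,r_0^\vee,\overline{\psi_0})$ on the $F$-side, which is exactly the relation you need, with the correct power $n_in_j=\dim\mathbf N$. The appearance of $\pi_j\circ\sigma$ is handled separately by the principle $\gamma(s,\pi,r,\psi)=\gamma(s,\pi\circ\varphi,r\circ{}^L\varphi,\psi)$ for an $F$-automorphism $\varphi$ of $\mathbf M_0$, applied to $(x,y)\mapsto(x,\sigma(y))$. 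So your first suggested route (``direct unraveling of the Whittaker-integral definitions under change of base field'') is the one the paper takes, and it is essentially a one-line observation once stated correctly; no globalization is needed.
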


\begin{proof} In Section 5 of [Go], Goldberg has established this multiplicativity, up to a root of unity.  The necessity of the Langlands lambda function to make the exact equality follows from the fact that the Rankin product is being computed in the setting of restriction of scalars and the following general principle:

Let $k/k_0$ be a finite extension of $p$-adic fields.  Let $\mathbf G$ be a split group over $k$ containing a Borel subgroup $\mathbf B = \mathbf T \mathbf U$ and maximal standard parabolic subgroup $\mathbf P = \mathbf M \mathbf N$, and let $\mathbf G_0 = \Res_{k/k_0} \mathbf G$ (similarly $\mathbf N_0$, $\mathbf M_0$, etc.).  Let $\pi$ be a smooth, irreducible representation of $\mathbf M(k) = \mathbf M_0(k_0)$. For $\psi$ a character of $k_0$, and $\psi_0 = \psi \circ \operatorname{Tr}_{k/k_0}$ a character of $k$, a  character $\chi$ of $\mathbf U(k) = \mathbf U_0(k_0)$ is $\psi$ (resp. $\psi_0$) generic (3.1 of [Sh2])  whether it is considered as a character of $\mathbf U(k)$ or of $\mathbf U_0(k_0)$.  Furthermore, if $\pi$ is $\chi$-generic, the Shahidi local coefficient [Sh1] $ C_{\chi}(s,\pi)$ is the same whether it is interpreted as coming from $\mathbf M$ in $\mathbf G$ or from $\mathbf M_0$ in $\mathbf G_0$.  All of this follows readily from the definitions.
    
Next, suppose that the adjoint action $r$ of $^L\mathbf M$ on $\operatorname{Lie}(^L\mathbf N)$ is irreducible.  Then so is the adjoint action $r_0$ of $^L\mathbf M_0$ on $\operatorname{Lie}(^L\mathbf N_0)$.  Shahidi's theorem (8.3.2 of [Sh2]) says that
    
\[ \lambda(k/k_0,\psi)^{\operatorname{Dim} \mathbf N} \gamma(s,\pi,r_0^{\vee},\overline{\psi_0}) = C_{\chi}(s,\pi) = \gamma(s,\pi,r^{\vee},\overline{\psi})\]
    
Note also that the appearance of the $\pi_j \circ \sigma$ in the Rankin product follows from the following general principle:

Let $\mathbf M_0$ be a connected, reductive group over $F$, let $\pi$ be a smooth, irreducible representation of $\mathbf M_0(F)$, and let $r$ be an LS-representation of $^L\mathbf M_0$.  If $\varphi$ is an automorphism of $\mathbf M_0$ over $F$, then  $\varphi$ induces an automorphism $^L\varphi$ of $^L\mathbf M_0$.  Then $r \circ \, ^L\varphi$ is an LS-representation of $^L\mathbf M_0$, and we have
\[ \gamma(s,\pi, r,\psi) = \gamma(s, \pi \circ \varphi, r \circ \,  ^L\varphi,\psi)\]
We then take $\mathbf M_0 = \Res_{E/F} \GL_n \times \GL_m$, and $\varphi$ the automorphism of $\mathbf M_0$ defined on points by $(x,y) \mapsto (x, \sigma(y))$.  \end{proof}

Suppose that $\pi_1$ and $\pi_2$ are representations of $\GL_n(E)$ and $\GL_m(E)$ corresponding to Frobenius semisimple representations $\rho_1$ and $\rho_2$ of $W_E'$.  Then $\pi_2 \circ \sigma$ corresponds to the representation $\rho_2 \circ \iota_z$, where $\iota_z$ is conjugation by an element $z$ of $W_F'$ which is not in $W_E'$, and we have

\begin{equation*}
    \begin{split}
         \gamma(s, \pi_1 \times (\pi_2 \circ \sigma), \psi \circ \operatorname{Tr}_{E/F}) & =  \gamma(s, \rho_1 \otimes (\rho_2 \circ \iota_z), \psi \circ \operatorname{Tr}_{E/F}) \\
        & = \lambda(E/F,\psi)^{-nm}\gamma(s, \Ind_{E/F}\rho_1 \otimes (\rho_2 \circ \iota_z), \psi)
    \end{split}
\end{equation*}

\subsection{Template of the global argument}

We return to the notation and conventions of Section 2.2.  Let $\Sigma$ be an $n$-dimensional representation of the global Weil group $W_K$, and let $\Psi = \otimes \psi_v$ be a character of $\mathbb A_k/k$.  Consider the representation $\otimes \Ind_{K/k} \Sigma$ of $W_k$ obtained from $\Sigma$ by tensor induction, and the global L-function and epsilon factor

\[ L(s,\otimes \Ind_{K/k} \Sigma) = \prod\limits_v L(s,(\otimes \Ind_{K/k} \Sigma)_v)\]
\[ \epsilon(s, \otimes \Ind_{K/k} \Sigma) = \prod\limits_v \epsilon(s, (\otimes \Ind_{K/k} \Sigma)_v, \Psi_v) \]

From the global functional equation 
\[ L(s,\otimes \Ind_{K/k} \Sigma) = \epsilon(s,\otimes \Ind_{K/k} \Sigma)L(1-s, (\otimes \Ind_{K/k} \Sigma)^{\vee}) \]

we get
\[ \prod\limits_v \gamma(s, (\otimes \Ind_{K/k} \Sigma)_v,\Psi_v) =1 \]

Let $\Pi_w$ be the representation of $\GL_n(K_w)$ corresponding to the semisimplification of $\Sigma_w$ under the local Langlands correspondence.  Suppose that $\Pi = \otimes \Pi_w$ is a cuspidal automorphic representation of $\GL_n(\mathbb A_K)$.  We will let $\mathbf H = \Res_{K/k} \GL_n$, and consider $\Pi = \otimes_v \pi_v$ as a cuspidal automorphic representation of $\mathbf H(\mathbb A_k)$.  The L-group of $\mathbf H$ is the semidirect product of $\GL(V) \times \GL(V)$ by $\Gal(K/k)$, where $V$ is an $n$-dimensional complex vector space, and we can define the global Asai representation $R$ of $^L\mathbf H$ exactly as in the local case.  We have the global L-function and epsilon factor

\[ L(s,\Pi,R) = \prod\limits_v L(s,\pi_v, R_v)\]
\[ \epsilon(s,\Pi,R) = \prod\limits_v \epsilon(s,\pi_v,R_v,\Psi_v)\]

where $R_v$ is the composition of $^L\mathbf H_{k_v} \rightarrow \, ^L\mathbf H$ and $R$.  From the global functional equation

\[ L(s,\Pi,R) = \epsilon(s,\Pi,R) L(1-s,\Pi^{\vee},R)\]

we get

\[ \prod\limits_v \gamma(s,\pi_v,R_v,\Psi_v) = 1\]

Therefore,

\[ \prod\limits_v \gamma(s,\pi_v,R_v,\Psi_v) = \prod\limits_v \gamma(s, (\otimes \Ind_{K/k} \Sigma)_v,\Psi_v) \]

\newtheorem{D}{Proposition}[subsection]

\begin{D} If $v$ is a place of $k$, and any of the following conditions is met:

(i): There are two places $w$ and $w'$ of $K$ lying over $v$

(ii): $v$ is archimedean

(iii): $v$ is nonarchimedean and $\pi_v$ has an Iwahori fixed vector

then $\gamma(s,\pi_v,R_v,\Psi_v) = \gamma(s, (\otimes \Ind_{K/k} \Sigma)_v,\Psi_v)$.

\end{D}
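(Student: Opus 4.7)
The plan is to dispatch the three cases separately, each time reducing to a local equality of gamma factors that is already available either elsewhere in the paper or in the literature; the proposition mainly consolidates what is known. Case (iii) is already noted in the lead-up to Section 2.1: when $\pi_v$ admits an Iwahori fixed vector, the equality of Asai gamma factors follows from Proposition 8.2.7 of [Sh2] together with the compatibility of the local Langlands correspondence for tori with Weil restriction of scalars, so I would simply cite this.

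Case (ii), when $v$ is archimedean, is essentially tautological. The local Langlands correspondence for archimedean reductive groups is set up so that the $L$- and $\epsilon$-factors on the representation side are \emph{defined} to equal the Artin factors of the composition of the Langlands parameter with the chosen representation of the $L$-group, and the correspondence is compatible with restriction of scalars. Applied to $\mathbf H_{k_v}$ and the Asai representation $R_v$, together with the identification $R_v \circ \underline{\rho_v} = (\otimes \Ind_{K/k}\Sigma)_v$ (which combines the general equality $\mathscr R \circ \underline{\rho} = \otimes \Ind_{W_E'}^{W_F'}\rho$ from the beginning of Section 2.1 with Proposition 2.2.3), the equality of gamma factors follows immediately.

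Case (i) carries the real content. When $v$ splits in $K$ into two places $w$ and $w'$, Proposition 2.2.3(ii) gives $(\otimes \Ind_{K/k}\Sigma)_v = \Sigma_w \otimes \Sigma_{w'}$, so the Galois side is already a Rankin--Selberg gamma factor for $W_{k_v}'$. On the automorphic side the split place makes $\mathbf H(k_v)$ identify with $\GL_n(k_v) \times \GL_n(k_v)$, $\pi_v$ with $\Pi_w \otimes \Pi_{w'}$, and $^L\mathbf H_{k_v}$ with $\GL(V) \times \GL(V) \times W_{k_v}'$ with trivial Galois action, in which terms $R_v(T,S) = T \otimes S$; the Langlands--Shahidi Asai gamma factor therefore coincides with the Rankin--Selberg gamma factor $\gamma(s, \Pi_w \times \Pi_{w'}, \Psi_v)$, and Henniart's theorem on the compatibility of the local Langlands correspondence for $\GL_n$ with Rankin--Selberg gamma factors identifies this with $\gamma(s, \Sigma_w \otimes \Sigma_{w'}, \Psi_v)$. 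The main obstacle is to pin down this unfolding exactly on the nose: one must verify that no Galois twist creeps into the split-place description of $^L\mathbf H_{k_v}$ and of $R_v$, and that the Langlands--Shahidi construction at a split place produces the Rankin--Selberg gamma factor literally, rather than up to a constant or a lambda factor of the kind that appears in Proposition 2.3.1. Once that bookkeeping is done, the rest of the proof consists of citations.
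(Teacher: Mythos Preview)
Your proposal is correct and follows essentially the same route as the paper: case (i) is handled by unfolding at a split place via Proposition 2.2.3(ii) and invoking the Rankin--Selberg compatibility of the local Langlands correspondence for $\GL_n$, while cases (ii) and (iii) are reduced, via Proposition 2.2.3(i), to the already-established instances of Theorem 2$'$ (archimedean definition and the Iwahori-fixed-vector case cited after Theorem 2). The one detail the paper makes explicit that you leave implicit is that in case (i) the local parameter of $\Pi_w$ is $(\Sigma_w)_{\mathrm{ss}}$ rather than $\Sigma_w$, so one must note that the Artin gamma factor depends only on the semisimplification to conclude $\gamma(s,(\Sigma_w)_{\mathrm{ss}}\otimes(\Sigma_{w'})_{\mathrm{ss}},\Psi_v)=\gamma(s,\Sigma_w\otimes\Sigma_{w'},\Psi_v)$.
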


\begin{proof} Assume we are in the case (i).  Then we can identify $k_v = K_w = K_{w'}$, and we have

\[ \mathbf H_{k_v} = \GL_n \times \GL_n\]
\[ ^L\mathbf H_{k_v} = \GL(V) \times \GL(V)\]
\[ \pi_v = \Pi_w \boxtimes \Pi_{w'} \]
\[ \gamma(s, \pi_v, R_v,\Psi_v) = \gamma(s, \Pi_w \times \Pi_{w'},\Psi_v)\]
\[ (\otimes \Ind_{K/k} \Sigma)_v = \Sigma_w \otimes \Sigma_{w'} \]

where this last equality is Proposition 2.2.3.  We then have the equality of gamma factors, because tensor products go to Rankin products under the local Langlands correspondence.  Note that the gamma factor on the Artin side only depends on the semisimplification of the representation.  

Assume that we are not in the case (i), so there is only one place $w$ of $k$ lying over $v$.  Then

\[ \mathbf H_{k_v} = \Res_{K_w/k_v} \GL_n \]
\[ \pi_v = \Pi_w \]
\[ R_v = \mathscr R\]
\[ (\otimes \Ind_{K/k} \Sigma)_v = \otimes \Ind_{K_w/k_v} \Sigma_w\]
where the last equality is again Proposition 2.2.3.  The equality of gamma factors is then exactly the assertion of Theorem 2$'$, which we have remarked is valid in the cases (ii) and (iii) (see the remark below the statement of Theorem 2).  
\end{proof}

Proposition 2.4.1 allows us to cancel off all but finitely many of the gamma factors, leaving us with

\[ \prod\limits_{v \in S} \gamma(s,\pi_v,\mathscr R,\Psi_v) = \prod\limits_{v \in S} \gamma(s, \otimes \Ind_{K_w/k_v} \Sigma_w,\Psi_v) \tag{2.4}\]

where $S$ is a finite set of finite places $v$ of $k$, each of which has only one place $w$ of $K$ lying over it.

\subsection{The stable equality}

From now on, $E/F$ will be a quadratic extension of $p$-adic fields.  Let $\pi$ be a smooth, irreducible representation of $\GL_n(E)$, corresponding to an $n$-dimensional Frobenius semisimple representation $\rho$ of $W_E'$.  Let $\eta$ be a character of $W_E$, which we identify as a character of $E^{\ast}$ by local class field theory.  Then define $\pi \eta$ to be $\pi(\eta \circ \operatorname{det})$.  Then $\pi \eta$ corresponds to $\rho \eta$ under the LLC.  

\newtheorem{D1}{Lemma}[subsection]

\begin{D1} Let $\eta = || \cdot ||^{s_0}$ be an unramified character of $W_E$, and $\pi$ a smooth, irreducible representation of $\GL_n(E)$.  Then

\[ \gamma(s, \pi \eta, \mathscr R,\psi) = \gamma(s+2s_0,\pi,\mathscr R,\psi)\]
\[ \gamma(s,\otimes \Ind_{E/F}(\rho \eta),\psi) = \gamma(s+2s_0, \otimes \Ind_{E/F} \rho,\psi) \]

\end{D1}

\begin{proof} The second equality follows from Lemma 2.1.4 (i), and general facts about how Artin factors are affected by unramified twists.  The first equality will be proved later in the generic case in Lemma 3.3.1.  When $\pi$ is not generic, one reduces to the generic case using multiplicativity (Proposition 2.3.1).  \end{proof}

It follows from Lemma 2.5.1 that if Theorem $2'$ holds for a given representation $\pi$, it also holds for unramified twists of $\pi$.

In this section, we prove the following stable version of Theorem 2$'$ for supercuspidal representations:

\newtheorem{D2}[D1]{Proposition}

\begin{D2} (Stable equality for supercuspidals) Let $\pi$ be a supercuspidal representation of $\GL_n(E)$, corresponding to an irreducible representation $\rho$ of $W_E$.  Then for all sufficiently highly ramified characters $\eta$ of $W_E$, we have
\[ \gamma(s, \pi \eta, \mathscr R, \psi) = \gamma(s, \otimes \Ind_{E/F}(\rho \eta), \psi) \]

\end{D2}

We first prove Proposition 2.5.2 in the case $n = 1$.  For this, we require a simple lemma on idelic characters:

\newtheorem{D3}[D1]{Lemma}

\begin{D3} Let $K$ be a number field, $w_1, ... , w_r$ finite places of $K$, and  $\eta_1, ... , \eta_r$ characters of $K_{w_1}^{\ast}, ... , K_{w_r}^{\ast}$.  There exists a unitary character $\mathscr X = \otimes_w \mathscr X_w$ of $\mathbb A_K^{\ast}/K^{\ast}$ such that $\mathscr X_{w_i}$ and $\eta_i$ agree on $\mathcal O_{w_i}^{\ast}$ for $1 \leq i \leq r$, and $\mathscr X_w$ is unramified at all other finite places $w$. 

\end{D3}

\begin{proof} Consider the compact subgroup $\prod\limits_{w < \infty} \mathcal O_w^{\ast}$ of $\mathbb A_K^{\ast}$.  Define a unitary character $\mathscr X = \otimes \mathscr X_w$ of this compact subgroup by setting $\mathscr X_{w_i} = \eta_i$ for $1 \leq i \leq r$, and $\mathscr X_w = 1$ for all other finite places $w$.  Since this compact subgroup has trivial intersection with $K^{\ast}$, we can extend $\mathscr X$ to a unitary character of $K^{\ast} \prod\limits_{w < \infty} \mathcal O_w^{\ast}$ by making it trivial on $K^{\ast}$.  Now $K^{\ast} \prod\limits_{w < \infty} \mathcal O_w^{\ast}$ is closed $\mathbb A_K^{\ast}$, being the product of a closed set and a compact set.  Hence $\mathscr X$ can be extended to a unitary character of $\mathbb A_K^{\ast}$ by Pontryagin duality.  It is trivial on $K^{\ast}$, and satisfies the requirements of the lemma.

\end{proof}

\newtheorem{D4}[D1]{Lemma}

\begin{D4} Theorem 2$'$ (and hence Proposition 2.5.2) holds for the case $n = 1$.

\end{D4}

Note that Proposition 2.5.2 follows from Theorem 2$'$, because the local Langlands correspondence is compatible with twisting by characters.  

\begin{proof} In this case, $\pi$ is a character $\chi$ of $E^{\ast} = \Res_{E/F} \GL_1(F)$, and the $\rho$ corresponding to $\chi$ is a character of $W_E$.   We globalize the situation by finding:

\begin{enumerate}
    \item A quadratic extension of number fields $K/k$, with places $w_0 \mid v_0$, such that $K_{w_0} = E, k_{v_0} = F$
    
    \item A unitary character $\mathscr X = \otimes \mathscr X_w$ of $\mathbb A_K/K^{\ast}$ such that $\mathscr X_{w_0}$ agrees with $\chi$ on $\mathcal O_E^{\ast}$, and $\mathscr X_w$ is unramified for finite $w \neq w_0$ (Lemma 2.5.3).
    
    \item A character $\Psi = \otimes \Psi_v$ of $\mathbb A_k/k$ such that $\Psi_{v_0} = \psi$.
\end{enumerate}

We identify $\mathscr X$ as a one dimensional representation of the global Weil group, so that for each place $w$ of $K$, the one dimensional representation $(\mathscr X)_w$ of $W_{K_w}$ identifies with the character of $\mathscr X_w$ of $K_w^{\ast}$.  Now we identify $\mathscr X = \otimes_v \mathfrak X_v$ as a cuspidal automorphic representation of $\Res_{K/k} \GL_1(\mathbb A_k)$.  Using the template of the global argument (Section 2.4), we have
\[ \prod\limits_{v \in S} \gamma(s, \mathscr X_w, \mathscr R, \Psi_v) = \prod\limits_{v \in S} \gamma(s, \otimes \Ind_{K_w/k_v} \mathscr X_w, \Psi_v) \]
where $S$ is a finite set of finite places containing $v_0$, such that each place $v$ of $S$ has only one place $w$ of $K$ lying over it.  But since $\mathscr X_w$ is unramified at every place other than $w_0$, the situation (iii) of the Proposition 2.4.1 applies to allow us to cancel off all the gamma factors other than at $v_0$, giving us
\[ \gamma(s, \mathscr X_{w_0}, \mathscr R, \psi) = \gamma(s, \otimes \Ind_{E/F} \mathscr X_{w_0},\psi) \]
Now $\pi = \chi$ agrees with $\mathscr X_{w_0}$, hence they differ by an unramified character $|| \cdot ||^{s_0}$ for some complex number $s_0$.  Since both gamma factors are compatible with twisting by unramified characters (Lemma 2.5.1), we get the required equality.  
\end{proof}

Now, we will finish the proof of the stability equality theorem by induction on $n$.  Let us state formally our induction hypothesis:

\newtheorem*{IH}{Induction Hypothesis}

\begin{IH} Let $n \geq 2$.  For each integer $m < n$, each quadratic extension $E/F$ of $p$-adic local fields, and each supercuspidal representations $\pi$ of $\GL_m(E)$, corresponding to an $m$-dimensional irreducible Weil representation $\rho$, we have

\[ \gamma(s, \pi \eta, \mathscr R, \psi) = \gamma(s, \otimes \Ind_{E/F}(\rho \eta), \psi) \]

for all sufficiently highly ramified characters $\eta$ of $W_E$, with necessary degree of ramification depending on $E/F$, $\pi$, and $\psi$.
\end{IH}

Assume for the rest of this section that the induction hypothesis holds for a given $n$.  Let $E/F$ be a quadratic extension of $p$-adic fields, $\pi$ a supercuspidal representation of $\GL_n(E)$, and $\rho$ the $n$-dimensional irreducible representation of $W_E$ corresponding to $\pi$.  Our first step in showing that the stable equality theorem holds for $n+1$ is to show ``equality at a base point.''

\newtheorem{D5}[D1]{Propositon}

\begin{D5} (Equality at a base point) Let $n \geq 2$ be an integer, and assume the induction hypothesis for $n$.  Let $\omega_0$ be any character of $E^{\ast}$.  Then exists an $n$-dimensional irreducible representation $\rho_0$ of $W_E$, with $\det \rho_0$ corresponding to $\omega_0$ by local class field theory, such that if $\pi_0$ is the supercuspidal representation of $\GL_n(E)$ corresponding to $\rho_0$, and $\eta$ is any character of $E^{\ast}$, then

\[ \gamma(s, \pi(\rho_0) \eta, \mathscr R, \psi) = \gamma(s,\otimes \Ind_{E/F}(\rho_0 \eta), \psi) \]

\end{D5}

Our proof of equality at a base point will be a global argument.  We start with a lemma of Henniart.

\newtheorem{D6}[D1]{Lemma}

\begin{D6}Let $\omega_0$ be the character in Proposiiton 2.5.5.  There exists a quadratic extension of number fields $K/k$, and an $n$-dimensional complex representation $\Sigma$ of $W_K$, with the following properties:

(i): There exist places $w_0$ and $v_0$ of $K$ and $k$, respectively, such that $k_{v_0} = F$ and $K_{w_0} = E$.

(ii): The representation $\rho_0 := \Sigma_{w_0}$ of $W_{K_{w_0}} = W_E$ is irreducible, and $\det \Sigma_{w_0}$ corresponds to $\omega_0$ by local class field theory.

(iii): At all finite places $w$ of $K$ with $w \neq w_0$, the representation $\Sigma_{w_0}$ is not irreducible.

(iv): If $\pi_w$ is the representation of $\GL_n(K_{w})$ corresponding to the semisimplication $(\Sigma_w)_{\textrm{ss}}$ of $\Sigma_w$ by the local Langlands correspondence, then $\Pi = \bigotimes_w \pi_w$ is a cuspidal automorphic representation of $\GL_n(\mathbb A_K)$.

\end{D6}

\begin{proof} This is essentially Lemma 3.1 of [CoShTs].  The only difference is the lemma only mentions the field $K$, not the field $k$. In the proof, one fixes the unramified extension $M$ of $E$ of degree $n$, and produces a degree $n$ extension of number fields $\mathbb M/K$ and an extension of places $w'/w$ for which $\mathbb M_{w'} = M, K_w = E$.  We can choose $k, \mathbb M$, and $K$ at the same time to satisfy the hypotheses of our modified lemma.  \end{proof}

\begin{proof} (of Proposition 5.5.5) Following the global template (Section 2.4), we have
\[ \prod\limits_{v \in S} \gamma(s,\pi_v,\mathscr R,\Psi_v) = \prod\limits_{v \in S} \gamma(s, \otimes \Ind_{K_w/k_v} \Sigma_w,\Psi_v) \]
for a finite set $S$ of finite places of $k$ containing $v_0$, where each place $v$ of $S$ has only one place $w$ of $K$ lying over it.  Let $T = S - \{v_0\}$.  For each place $v$ of $T$, we know that $\Sigma_w$ is not irreducible, so the semisimplification $(\Sigma_w)_{\textrm{ss}}$ decomposes as $\Sigma_{w,1} \oplus \cdots \Sigma_{w,r_w}$ for an irreducible representation $\Sigma_{w,i}$ of $W_{K_w}$ of dimension $n_{w,i} < n$.  Let $\Pi_{w,i}$ be the supercuspidal representation $\GL_{n_i}(K_w)$ corresponding to $\Sigma_{w,i}$.  By the induction hypothesis, we have for all sufficiently highly ramified characters $\mathscr X_w$ of $W_{K_w}$, that
\[ \gamma(s, \Pi_{w,i} \mathscr X_w, \mathscr R, \psi) = \gamma(s, \otimes \Ind_{K_w/k_v} \Sigma_{w,i} \mathscr X_w, \psi) \tag{2.5}\]
We apply Lemma 2.5.3 to find a unitary character $\mathscr X = \otimes \mathscr X_w$ of $\mathbb A_K^{\ast}/K^{\ast}$, such that $\mathscr X_w$ is unramified for all finite places $w$ with $w \mid v$ and $v \not\in S$, $\mathscr X_{w_0}$ agrees with $\omega_0$ on $\mathcal O_E^{\ast}$, and for each $v \in T$ with $w \mid v$, $\mathscr X_w$ is sufficiently highly ramified such that (1) holds for all $1 \leq i \leq r_w$.  

We now apply the global template again, replacing $\Sigma$ by $\Sigma \mathscr X$, and $\Pi$ by $\Pi \mathscr X$, so that 
\[ \prod\limits_{v \in S} \gamma(s,\Pi_w \mathscr X_w,\mathscr R,\Psi_v) = \prod\limits_{v \in S} \gamma(s, \otimes \Ind_{K_w/k_v} (\Sigma_w \mathscr X_w),\Psi_v) \]
We have the same $S$ as before, since $\mathscr X_w$ is unramified for all finite places $v \not\in S$, $w \mid v$.  For $v \in T$, and $w \mid v$, we have that $\Pi_{w,i} \mathscr X_w, 1 \leq i \leq r_w$, is the supercuspidal support of $\Pi_w \mathscr X_w$, so that after relabeling the $\Pi_{w,i} \mathscr X_w$,
\[ \Pi_w \mathscr X_w \subset \Ind^{\GL_n(E)} (\Pi_{w,1} \mathscr X_w) \boxtimes \cdots \boxtimes (\Pi_{w,r_w} \mathscr X_w) \]
and therefore multiplicativity (Proposition 2.3.1) gives 
\begin{equation*}
    \begin{split}
        \gamma(s, \Pi_w \mathscr X_w, \mathscr R, \Psi_v) = & \prod\limits_{i=1}^{r_w} \gamma(s, \Pi_{w,i} \mathscr X_w, \mathscr R, \Psi_v)  \prod\limits_{1 \leq i < j \leq r_w} \lambda(K_w/k_v,\Psi_v)^{n_{w_i}n_{w_j}}\\ &  \gamma(s, (\Pi_{w,i} \mathscr X_w) \times (\Pi_{w,j} \mathscr X_w \circ \sigma),\Psi_v \operatorname{Tr}_{w/v})
    \end{split}
\end{equation*}
where $\sigma$ is the nontrivial element of $\Gal(K_w/k_v)$.  On the other hand, $\Sigma_w \mathscr X_w$ has the irreducible representations $\Sigma_{w,i} \mathscr X_w$ as the factors of its composition series, so by Lemma 2.1.3, we have

\begin{equation*}
    \begin{split}
        \gamma(s, \otimes \Ind_{K_w/k_v} \Sigma_w \mathscr X_w,  \Psi_v) = & \prod\limits_{i=1}^{r_w} \gamma(s, \otimes \Ind_{K_w/k_v} \Sigma_{w,i} \mathscr X_w, \Psi_v) \\& \prod\limits_{1 \leq i < j \leq r_w} \gamma(s, \Ind_{K_w/k_v} \Sigma_{w_i} \mathscr X_w \otimes (\Sigma_{w_j} \mathscr X_w \circ \iota_z),\Psi_v)
    \end{split}
\end{equation*}

where $z$ is an element of $W_{k_v}$ which is not in $W_{K_w}$.  By equation (2.5) and remark at the end of Section 2.3, we have the equality of gamma factors at every place $v \in T$.  Thus we have 
\[ \gamma(s, \pi_0 \mathscr X_{w_0}, \mathscr R, \psi) = \gamma(s, \otimes \Ind_{E/F}(\rho_0 \mathscr X_{w_0}), \psi) \]
Since $\mathscr X_{w_0}$ and $\omega_0$ agree on $\mathcal O_E^{\ast}$, they differ by an unramified character, so Lemma 2.5.1 completes the proof of equality at a base point.  
\end{proof}

To proceed with the proof of stable equality, we will need the following stability results on both sides.

\newtheorem{D7}[D1]{Proposition}

\begin{D7} (Arithmetic stability) Let $\rho_1, \rho_2$ be two representations of $W_E$ with $\det \rho_1 = \det \rho_2$.  Then for all sufficiently highly ramified characters $\eta$ of $W_E$, we have
\[ \gamma(s, \otimes \Ind_{E/F} (\rho_1\eta), \psi) = \gamma(s, \otimes \Ind_{E/F} (\rho_2\eta), \psi)\]

\end{D7}

\begin{proof} It is a consequence of Deligne's proof of the existence of the local epsilon factors that if $\det \rho_1 = \det \rho_2$, then $\gamma(s, \rho_1 \eta, \Psi) = \gamma(s, \rho_2 \eta \Psi)$ for all characters $\Psi$ of $E$ and all sufficiently highly ramified characters of $\eta$ of $W_E$ [De].  We need only observe that under our hypothesis, we have $\det \otimes \Ind_{E/F} \rho_1 = \det \otimes \Ind_{E/F} \rho_2$, $\otimes \Ind_{E/F}(\rho_i \eta) = (\otimes \Ind_{E/F}\rho_i)(\otimes \Ind_{E/F} \eta)$, and that $(\otimes \Ind_{E/F} \eta)$ is highly ramified if $\eta$ is (Lemma 2.1.4 (ii)).
\end{proof}

The stability result on the analytic side, Proposition 2.5.8, is much more difficult.  Our proof, which is the content of Section 3, is purely local, and mirrors the approach taken by Shahidi, Cogdell, and Tsai in [CoShTs].  

\newtheorem{D8}[D1]{Proposition}
\begin{D8} (Analytic stability) Let $\pi_1, \pi_2$ be supercuspidal representations of $\GL_n(E)$ with the same central character.  Then for all sufficiently highly ramified characters $\eta$ of $E^{\ast}$, we have
\[ \gamma(s, \pi_1 \eta, \mathscr R,\psi) = \gamma(s, \pi_2 \eta, \mathscr R,\psi)\]
\end{D8}

\begin{proof} This proposition will be shown to be equivalent to Proposition 3.3.3, whose proof will occupy the entirety of Section 3.\end{proof}

Granting the analytic stability result, Proposition 2.5.8, we can now finally finish the proof of stable equality (Proposition 2.5.2).  Let $\pi$ be a supercuspidal representation of $\GL_n(E)$, and $\rho$ the corresponding $n$-dimensional irreducible representation of $W_E$.  Let $\omega_0$ be the central character of $\pi$, identified with a character of $W_E$, so that $\det \rho = \omega_0$.  By Proposition 2.5.5, there exists an $n$-dimensional irreducible representation $\rho_0$ of $W_E$ with $\det \rho_0 = \det \rho$, such that if $\pi_0$ is the supercuspidal representation $\GL_n(E)$ corresponding to $\rho_0$, then
\[ \gamma(s, \pi_0 \eta, \mathscr R, \psi) = \gamma(s, \otimes \Ind_{E/F}(\rho_0 \eta), \psi) \]
for all characters $\eta$ of $E^{\ast}$.  Now $\pi$ and $\pi_0$ have the same central character $\omega_0$.  Taking $\eta$ to be very highly ramified, we have by Propositions 2.5.7 and 2.5.8,
\begin{equation*}
    \begin{split}
        \gamma(s, \pi \eta, \mathscr R,\psi) & = \gamma(s, \pi_0 \eta, \mathscr R, \psi) \\
        & = \gamma(s, \otimes \Ind_{E/F}(\rho_0 \eta), \psi)\\
        & = \gamma(s, \otimes \Ind_{E/F}(\rho \eta),\psi)
    \end{split}
\end{equation*}
This completes the proof of the induction step, and the proof of Proposition 2.5.2.

\newtheorem{D9}[D1]{Corollary}

\begin{D9} (Stable equality for general representations) Let $\pi$ be a smooth, irreducible representation of $\GL_n(E)$, corresponding to a Frobenius semisimple representation $\rho$ of $W_E'$.  Then for all sufficiently highly ramified characters $\eta$ of $W_E$, we have

\[ \gamma(s,\pi \eta, \mathscr R,\psi) = \gamma(s, \otimes \Ind_{E/F}(\rho \eta),\psi)\]

\end{D9}

\begin{proof} This follows from Proposition 2.5.2 and the following facts:

\begin{enumerate}
\item If $\pi_1, ... , \pi_r$ are supercuspidal representations of smaller $\GL$s with $\pi$ isomorphic to a subrepresentation of $\Ind^{\GL_n(E)}\pi_1 \boxtimes \cdots \boxtimes \pi_r$, and $\rho_i$ is the irreducible Weil representation corresponding to $\pi_i$, then the underlying Weil representation of $\rho$ is the direct sum of the $\rho_i$.

    \item The gamma factor on the Artin side depends only on the underlying Weil representation.
    
    \item The local Langlands correspondence is compatible by twisting of characters.
    
    \item Multiplicativity of gamma factors on both sides (Lemma 2.1.2 and Proposition 2.3.1), as well as the remark below Proposition 2.3.1 which relates Rankin products.
\end{enumerate}

\end{proof}

\subsection{Equality for monomial representations}

In this section, we prove the equality of gamma factors when $\rho$ is a monomial representation, which is to say a representation of $W_E$ which is induced from a finite order character of a finite Galois extension of $E$.

\newtheorem{E1}{Proposition}[subsection]

\begin{E1} (Equality for monomial representations) Let $E \subseteq L \subseteq M \subseteq \overline{F}$ be fields with $M$ a finite Galois extension of $E$, and $n = [L : E]$.  Let $\chi$ be a character of $\Gal(M/L)$, and let $\rho = \Ind_{L/E}(\chi)$.  Let $\pi$ be the representation of $\GL_n(E)$ corresponding to $\rho$.  Then
\[ \gamma(s, \pi, \mathscr R, \psi) = \gamma(s, \otimes \Ind_{E/F} \rho, \psi) \]
\end{E1}

The representation $\rho$ need not be irreducible, but being a Galois representation, it decomposes into a direct sum of irreducible representations $\rho_1, ... , \rho_r$ with degrees $n_1, ... , n_r$.  Let $\pi_i$ be the supercuspidal representation of $\GL_{n_i}(E)$ corresponding to $\rho_i$.  Then since $\rho$ is a Weil representation (as opposed to a Weil-Deligne representation), $\pi$ is fully induced from $\pi_1, ... , \pi_r$.

We will require the following lemma of Henniart:

\newtheorem{E2}[E1]{Lemma}

\begin{E2}There exist number fields

\[ k \subset K \subset \mathbb L \subset \mathbb M \]

together with a character $\mathscr Y$ of $\mathbb A_{\mathbb L}^{\ast}/\mathbb L^{\ast}$, and a place $w_0''$ of $\mathbb M$, such that:

\begin{enumerate}
    \item If $w_0',w_0,v$ are the places of $\mathbb L, K, k$ over which $w_0''$ lies, then $F = k_{v_0}, E = K_{w_0}, L = \mathbb L_{w_0'}, M = \mathbb M_{w_0''}$.
    
    \item $\mathbb M$ is Galois over $K$, with $[\mathbb M : K] = [\mathbb M_{w_0''} : K_{w_0}] = [M : E]$.  Hence $w_{0}''$ is the only place of $\mathbb M$ lying over $w_0$.
    
    \item $\mathscr Y_{w_0'} = \chi$ under local class field theory.
    
    \item Let $\Sigma = \Ind_{\Gal(\mathbb M/\mathbb L)}^{\Gal(\mathbb M/K)}(\mathscr Y)$.  Then there is a cuspidal automorphic representation $\Pi = \otimes_w \Pi_w$ of $\GL_m(\mathbb A_K)$ (where $m = [L : E]$) such that $\Sigma_{w_0} = \pi$.
\end{enumerate}

\end{E2}

This is Lemma 3.2 of [CoShTs].  Like Lemma 2.5.6, the original statement of this lemma did not include the field $k$, but we can easily modify the construction to include it.

Following the template of the global argument (Proposition 2.4), we have as usual
\[ \prod\limits_{v \in S} \gamma(s, \Pi_w, \mathscr R, \Psi_v) = \prod\limits_{v \in S} \gamma(s, \otimes \Ind_{K_w/k_v} \Sigma_w, \Psi_v) \]
where $S$ is a finite set of finite places $v$ of $k$ containing $v_0$, each of which has only one place $w$ of $K$ lying over it.  Let $\mathscr X = \otimes \mathscr X_w$ be a character of $\mathbb A_K^{\ast}/K^{\ast}$ which is very highly ramified at $v \in S - \{v_0\}$ and unramified at all other finite places.  We replace $\Sigma$ by $\Sigma \mathscr X$ in the template of the global argument, giving us
\[ \prod\limits_{v \in S} \gamma(s, \Pi_w \mathscr X_w, \mathscr R, \Psi_v) = \prod\limits_{v \in S} \gamma(s, \otimes \Ind_{K_w/k_v} (\Sigma_w\mathscr X_w), \Psi_v) \]
for the same set $S$.  By Corollary 2.5.9, and the fact that each $\mathscr X_w$ for $v \in S - \{v_0\}$ is highly ramified, we have equality of the gamma factor at all places $v \neq v_0$, and therefore
\[ \gamma(s, \pi \mathscr X_{w_0}, \mathscr R,\psi) = \gamma(s, \otimes \Ind_{E/F}(\rho \mathscr X_{w_0}),\psi) \]
Now $\mathscr X_{w_0}$ is unramified and therefore of the form $|| \cdot ||^{s_0}$ for some $s_0 \in \mathbb C$, so Lemma 2.5.1 concludes the proof of the proposition.

\subsection{Equality for Galois representations}

Using the equality of gamma factors for monomial representations and Brauer's theorem, we will prove the equality of gamma factors for all irreducible Galois representations.

\newtheorem{F1}{Proposition}[subsection]

\begin{F1}
(Equality for Galois representations) Let $\rho$ be an irreducible $n$-dimensional Galois representation, and let $\pi$ be the corresponding supercuspidal representation of $\GL_n(E)$.  Then

\[ \gamma(s, \pi, \mathscr R, \psi) = \gamma(s, \otimes \Ind_{E/F} \rho, \psi) \]

\end{F1}

\begin{proof} Choose $z \in W_F$ which is not in $W_E$.  By Brauer's theorem, there exist monomial representations $\rho_1, ... , \rho_r, \rho_1', ... , \rho_t'$, not necessarily all distinct, such that 

\[ \Sigma := \rho \oplus \bigoplus\limits_{i=1}^t  \rho_i' = \bigoplus\limits_{i=1}^r  \rho_i \]

We can compute $\gamma(s, \otimes \Ind_{E/F} \Sigma,\psi)$ in two ways using Lemma 2.1.2.  First,

\begin{equation*}
    \begin{split}
        \gamma(s, \otimes \Ind_{E/F} \Sigma,\psi) & = \prod\limits_{i=1}^r \gamma(s, \otimes \Ind_{E/F} \rho_i,\psi)  \prod\limits_{1 \leq i < j \leq r} \gamma(s, \Ind_{E/F} \rho_i \otimes (\rho_j \circ \iota_z),\psi)
    \end{split}
\end{equation*}

Second, we can write $\gamma(s, \otimes \Ind_{E/F} \Sigma,\psi)$ as 
\begin{equation*}
\begin{split}
\gamma(s, \otimes \Ind_{E/F} \Sigma,\psi)  = & \gamma(s, \otimes \Ind_{E/F} \rho,\psi) \prod\limits_{i=1}^t \gamma(s, \otimes \Ind_{E/F} \rho_i',\psi) \\ & \prod\limits_{1 \leq j \leq t} \gamma(s, \Ind_{E/F} \rho \otimes (\rho_j' \circ \iota_z),\psi) \prod\limits_{1 \leq i < j \leq t} \gamma(s, \Ind_{E/F} \rho_i' \otimes (\rho_j' \circ \iota_z),\psi) 
\end{split}
\end{equation*}

Let $\pi_1, ... , \pi_r, \pi_1', ... , \pi_t'$ be the smooth, irreducible representations corresponding to the Galois representations $\rho_1, ... , \rho_r, \rho_1', ... , \rho_t'$.  Let $n_i, n_i'$ be the degrees of $\pi_i, \pi_i'$.  Let $\Pi$ be the smooth, irreducible representation of $\GL_n(E)$ corresponding to $\Sigma$.  On the analytic side, we have

\[ \Pi = \Ind^{\GL_n(E)} \pi \boxtimes \pi_1 \boxtimes \cdots \boxtimes  \pi_r = \Ind^{\GL_n(E)} \pi_1' \boxtimes \cdots \boxtimes \pi_t'\]

Then we can apply multiplicativity of gamma factors (Proposition 2.3.1) in two different ways:

\begin{equation*}
    \begin{split}
        \gamma(s,\Pi,\mathscr R,\psi) =  \prod\limits_{i=1}^t \gamma(s, \pi_i', \mathscr R, \psi) \\
        \prod\limits_{1 \leq i < j \leq t} \lambda(E/F,\psi)^{n_i'n_j'}\gamma(s, \pi_i' \times (\pi_j' \circ \sigma),\psi \circ \operatorname{Tr}_{E/F})
    \end{split}
\end{equation*} 

and 

\begin{equation*}
    \begin{split}
        \gamma(s,\Pi,\mathscr R,\psi) =& \gamma(s,\pi,\mathscr R,\psi) \prod\limits_{i=1}^r \gamma(s, \pi_i, \mathscr R, \psi) \\&
        \prod\limits_{j=1}^r \lambda(E/F, \psi)^{nn_i}\gamma(s, \pi \times (\pi_j \circ \sigma), \psi \circ \operatorname{Tr}_{E/F})\\&\prod\limits_{1 \leq i < j \leq r} \lambda(E/F,\psi)^{n_in_j}\gamma(s, \pi_i \times (\pi_j \circ \sigma),\psi \circ \operatorname{Tr}_{E/F})
    \end{split}
\end{equation*}

Considering the first way we have applied multiplicativity on both sides, we have that \[\lambda(E/F,\psi)^{n_i'n_j'}\gamma(s, \pi_i' \times (\pi_j' \circ \sigma),\psi \circ \operatorname{Tr}_{E/F}) = \gamma(s,\Ind_{E/F} (\rho_i \otimes (\rho_j \circ \iota_z)), \psi)\] and we also have the equality of $\gamma(s, \pi_i', \mathscr R, \psi)$ and $\gamma(s, \otimes \Ind_{E/F} \rho_i',\psi)$ by Proposition 2.6.1.  Thus

\[ \gamma(s, \Pi, \mathscr R,\psi) = \gamma(s, \otimes \Ind_{E/F} \Sigma, \psi) \]

Now we write out the left and right hand sides of this last equation in the second way that we have described each.  We match up the terms again using the same arguments, giving us

\[ \gamma(s,\pi,\mathscr R,\psi) = \gamma(s, \otimes \Ind_{E/F} \rho,\psi) \]

This completes the proof of equality for Galois representations.  \end{proof}

We now complete the proof of Theorem 2$'$.  Let $\pi$ be a supercuspidal representation of $\GL_n(E)$, and $\rho$ the corresponding $n$-dimensional irreducible representation of $W_E$.  Then there exists an unramified character $\eta$ of $W_E$ such that $\rho \eta$ is a Galois representation (2.2.1 of [Ta]).  Identifying $\eta$ as a character of $\GL_n(E)$ through the determinant, we know that $\pi \eta$ corresponds to $\rho \eta$ under the local Langlands correspondence.  Proposition 2.7.1 tells us that Theorem 2$'$ holds for $\pi \eta$ and $\rho \eta$.  Since the gamma factors on both sides are compatible by twisting by unramified characters (Lemma 2.5.1), we see that Theorem 2$'$ holds for supercuspidal representations.

Having established Theorem 2$'$ for supercuspidals, we have Theorem 2$'$ for general representations by the same reductions as in Corollary 2.5.9.  

\section{Analytic Stability}

In this section, we prove the analytic stability result, Proposition 2.5.8.  We begin by explaining how the Asai gamma factor $\gamma(s,\pi, \mathscr R,\psi)$ arises from the Langlands-Shahidi method, by embedding $\Res_{E/F} \GL_n$ as a maximal Levi subgroup $\mathbf M$ of the even unitary group $\mathbf G = U(n,n)$.  

The Asai gamma factor is equal to the Shahidi local coefficient $C_{\chi}(s,\pi)$ (Section 3.3), up to a root of unity.  Proposition 2.5.8 is then equivalent to the stability of this local coefficient (Proposition 3.3.3).  In [Sh1], Shahidi shows how his local coefficient can be expressed as a Mellin transform of a partial Bessel function, under certain conditions.  Our approach is to apply Shahidi's local coefficient formula, and then analyze these partial Bessel functions in a similar manner to [CoShTs].  

If $\alpha$ is the simple root corresponding to $\mathbf M$, one of Shahidi's assumptions (Assumption 5.1 of [Sh1]) for his local coefficient formula is the existence of an injection $\alpha^{\vee}: F^{\ast} \rightarrow Z_{\mathbf M}(F)/Z_{\mathbf G}(F)$ such that $\alpha \circ \alpha^{\vee} = 1$.  Unfortunately, this assumption is false in our case.  But this difficulty is not too serious: we can embed $\mathbf G$ in a larger group $\widetilde{\mathbf G}$, having the same derived group as $\mathbf G$, for which the assumption holds.  Since local coefficients only depend on the derived group (Section 3.5), we will be able to apply Shahidi's formula after all.

\subsection{Definition of the Unitary Group}

Let $W$ be the $2n$ by $2n$ matrix

\[ W = \begin{pmatrix} & I_n \\ -I_n \end{pmatrix} \]

where $I_n$ is the $n$ by $n$ identity matrix.  The unitary group $\mathbf G = U(n,n)$ is defined to be an outer form of $\GL_{2n}$ with the following Galois action for $X \in \GL_{2n}(\overline{F})$ and $\gamma \in \Gal(\overline{F}/F)$:

\[ \gamma.X = \begin{cases} \gamma(X) & \textrm{ if $\gamma|_E = 1_E$} \\ W \, ^t \gamma(X)^{-1}W^{-1} & \textrm{ if $\gamma|_E \neq 1_E$} \end{cases} \]

 where $\gamma(X)$ denotes the entrywise action of $\gamma$ on $X$.  In particular, we have 

\[ \mathbf G(E) = \GL_{2n}(E) \]

and in fact $\mathbf G$ splits over $E$, with $\mathbf G \times_F E = \GL_{2n,E}$.  Moreover, we see that $\Gamma = \Gal(E/F)$ acts on $\GL_{2n}(E)$ by

\[ \sigma.X = W^t \overline{X}^{-1}W^{-1} \]

where $\overline{X}$ is the entrywise application of the nontrivial element $\sigma$ of $\Gal(E/F)$ to $X$, and so

 \[ \mathbf G(F) = \{ X \in \GL_{2n}(E) : W \, ^t \overline{X}^{-1}W^{-1} = X \} \]

If we start with $\operatorname{SL}_{2n}$ instead of $\GL_{2n}$, we can define $\operatorname{SU}(n,n)$, the \textbf{special unitary group} in the same way, and in fact we have $\operatorname{SU}(n,n) = \mathbf G_{\der}$, the derived group of $\mathbf G$.  

The verification of the following details are straightforward, and we omit the proofs.

\newtheorem{1_1}{Proposition/Definition}[subsection]

\begin{1_1} (i): Let $\mathbf T$ be the usual maximal torus of $\GL_{2n}$, and let $\mathbf S$ be the subtorus of $\mathbf T$ defined by

\[ \mathbf S(\overline{F}) = \{ \begin{pmatrix} x_1 \\ & \ddots \\ & & x_n \\ & & & x_1^{-1} \\ & & & & \ddots \\ & & & & & x_n^{-1} \end{pmatrix} \} \]

Then $\mathbf S$ is the maximal $F$-split subtorus of $\mathbf T$, and in fact $Z_{\mathbf G_{\der}}(\mathbf S) = \mathbf T$.  Hence $\mathbf G$ is quasi-split over $F$, and $\mathbf S$ is a maximal $F$-split torus of both $\mathbf G_{\der}$ and $\mathbf G$.  

(ii): Let $\epsilon_1, ... , \epsilon_n$ be the basis of $X(\mathbf S)$ such that $\epsilon_i$ sends the above matrix to $x_i$.  Then

\[ \Delta_F = \{ \epsilon_1 - \epsilon_2, ... , \epsilon_{n-1} - \epsilon_n, 2 \epsilon_n \} \]

is a set of simple roots for $\mathbf S$ in $\mathbf G$.  The corresponding relative root system is of type $C_n$.  

(iii): Let $\mathbf B$ be the Borel subgroup (minimal parabolic $F$-subgroup) of $\mathbf G$ corresponding to $\Delta_F$.  Let $e_1, ... , e_{2n}$ be the usual basis for $X(\mathbf T)$.  The set $\Delta$ of simple roots of $\mathbf T$ in $\mathbf G$ corresponding to $\mathbf B$ is $A \cup B \cup A'$, where

\[A =  \{e_1 - e_2, ... , e_{n-1} - e_n\} \]

\[ A' = \{ -(e_{n+1} - e_{n+2}), ... , -(e_{2n-1} - e_{2n}) \} \]

\[ B = \{ e_n - e_{2n} \} \]

(iv): The nontrivial element $\sigma \in \Gamma = \Gal(E/F)$ switches $e_i$ and $-e_{n+i}$ for $1 \leq i \leq n$.  Hence the orbits of $\Delta$ under the action of $\Gamma$ are

\[ \{e_i - e_{i+1}, -(e_{n+i} - e_{n+i+1}) \} \]

for $1 \leq i \leq n-1$, as well as the singleton set $\{e_n - e_{2n} \}$.  In particular, $e_n - e_{2n}$ is the only simple root in $\Delta$ which is defined over $F$.

(v): Let $\theta = \Delta_F - \{2 \epsilon_n \}$, and let $\mathbf P$ be the corresponding maximal $F$-parabolic subgroup of $\mathbf G$.  It is self-associate.  Let $\mathbf N$ be the unipotent radical of $\mathbf P$, and let $\mathbf M$ be the unique Levi subgroup of $\mathbf P$ containing $\mathbf T$.  Then 
\[ \mathbf M(F) = \{ \begin{pmatrix} x \\ & ^t \overline{x}^{-1} \end{pmatrix} : x \in \GL_n(E) \} \]

\[ \mathbf N(F) = \{ \begin{pmatrix} I_n & X \\ & I_n \end{pmatrix}: X \in \operatorname{Mat}_n(E), \, ^t\overline{X} = X \} \]

\end{1_1}

\subsection{The L-group of $\mathbf G$}

We can identify the L-group of $^L\mathbf G$ with the semidirect product of $\GL_{2n}(\mathbb C)$ by $\Gal(E/F)$, where $\Gal(E/F)$ acts on $^L\mathbf G$ by $\sigma.X = W^tX^{-1}W^{-1}$.  The L-group of $\mathbf M$ can be identified with the semidirect product of $\GL_n(\mathbb C) \times \GL_n(\mathbb C)$ by $\Gal(E/F)$, which acts by $\sigma.(x,y) = (^tx^{-1},^ty^{-1})$.  The Lie algebra $^L\mathfrak n$ of the L-group of $\mathbf N$ identifies with $\operatorname{Mat}_n(\mathbb C)$, and the adjoint representation $r: \, ^L\mathbf M \rightarrow \GL(^L\mathfrak n)$ is given by
\[ r(x,y,1).X = xXy^{-1}\]
\[ r(\sigma).X = ^tX \]
It is irreducible.  
\newtheorem{1_2}[1_1]{Lemma}

\begin{1_2} Let $\pi$ be an irreducible, admissible representation of $\mathbf M(F) = \GL_n(E)$.  Then
\[ \gamma(s,\pi,r,\psi)=\gamma(s,\pi,\mathscr R,\psi)\]
where $\mathscr R$ is the Asai representation.  
\end{1_2}
\begin{proof} We can take $\Res_{E/F} \GL_n$ to be the group defined on $E$-points by $\GL_n(E) \times \GL_n(E)$, with $\Gal(E/F)$ acting by switching the factors.  Define an isomorphism $\Res_{E/F} \GL_n \rightarrow \mathbf M$ of algebraic groups over $F$ by
\[ (x,y) \mapsto \begin{pmatrix} x \\ & ^ty^{-1}\end{pmatrix} \]
Let $V$ be an $n$-dimensional complex vector space with basis $e_1, ... e_n$.  We can identify the space $^L\mathfrak n$ with $V \otimes V$, an isomorphism being given by sending the elementary matrix $E_{ij}$ to $e_i \otimes e_j$.  By identifying $\mathbf M$ with $\Res_{E/F} \GL_n$ via the isomorphism given, and taking the corresponding isomorphism of L-groups $^L\Res_{E/F} \GL_n \rightarrow \, ^L\mathbf M$, it is straightforward to check that $r$ now identifies with the representation $\mathscr R$.\end{proof}

\subsection{The local coefficient for $\mathbf M$ inside $\mathbf G$}

Let $\alpha = 2 \epsilon_n$ be the simple root of $\Delta_F$ which defines the maximal parabolic subgroup $\mathbf P$, and let $\rho$ be half the sum of the roots of $\mathbf S$ in $\mathbf N$.  We will calculate the element $\tilde{\alpha} \in X(\mathbf M)_F$ as defined in Section 2 of [Sh2].  We see that
\[ \rho = n(\epsilon_1 + \cdots + \epsilon_n)\]
and using the usual Weyl group and Galois group invariant bilinear form on $X(\mathbf T) \otimes_{\mathbb Z} \mathbb R$ we obtain $\tilde{\alpha} = \langle \rho, \alpha \rangle^{-1} \rho = n^{-1}\rho$.  Then we have:

\newtheorem{1_3}[1_1]{Lemma}

\begin{1_3} (i) For $s \in \mathbb C$, and all $m \in \GL_n(E) = \mathbf M(F)$,
\[ q_F^{\langle s \rho, H_{\mathbf M}(m) \rangle} = |\det m|_E^{ns/2}\]
\[ q_F^{\langle s\tilde{\alpha}, H_{\mathbf M}(m) \rangle} = |\det m|_E^{s/2}\]
(ii) If $\pi$ is a generic representation of $\GL_n(E)$, and $s_0 \in \mathbb C$, then
\[ \gamma(s,\pi |\det(-)|^{s_0}_E, \mathscr R,\psi) = \gamma(s+2s_0,\pi, \mathscr R,\psi)\]
\end{1_3}
\begin{proof} (i) is proved in Section 2 of [Go], and (ii) follows from (i), Lemma 3.2.1, and [Sh2], Theorem 8.3.2, condition 2.  \end{proof}

We have defined a set of simple nonrestricted roots $\Delta$.  Note that this is not the usual set of simple roots, so the unipotent radical $\mathbf U$ of $\mathbf B$ is not the group of upper triangular unipotent matrices.  For each $\beta \in \Delta$, we now define root vectors $x_{\beta}: \mathbf G_a \rightarrow \mathbf U_{\beta}$ on $\overline{F}$-points.  If $\beta = e_i - e_{i+1}$, for $i = 1, ... , n-1$, we define

\[ \mathbf x_{\beta}(t) = I_{2n} + tE_{i,i+1} \]

where $E_{i,i+1}$ is the $2n$ by $2n$ matrix with a $1$ in the $(i,i+1)$ position, and zeroes elsewhere.  If $\beta = -(e_{n+i} - e_{n+i+1})$ for $i = 1, ... , n-1$, we define 

\[ \mathbf x_{\beta}(t) = I_{2n} - tE_{n+i,n+i+1} \]

Finally, if $\beta= \alpha = e_n - e_{2n}$, we define

\[ \mathbf x_{\alpha}(t) = \mathbf x_{\beta}(t) = I_{2n} + t E_{n,2n} \]

This splitting is defined over $F$.  Having defined these root vectors, we can then define our canonical Weyl group representatives as in Section 2 of [Sh1].

\newtheorem{1_4}[1_1]{Lemma}

\begin{1_4} (i): For the set of relative simple roots $\alpha_i = \epsilon_i - \epsilon_{i+1}$ for $1 \leq i \leq n -1$ and $\alpha_n = \alpha = 2 \epsilon_n$, we have from the above splitting the following canonical Weyl group representatives for the corresponding simple reflections $w_1, ... ,w_n$.  First, let

\[ C = \begin{pmatrix} 0 & 1 \\ -1 & 0 \end{pmatrix} \]

For $1 \leq i \leq n-1$, we have 

\[ \dot w_i = \begin{pmatrix} I_{i-1} \\ & C \\ & & I_{n-2} \\ & & &  C \\ &  & & & I_{n-i-1} \end{pmatrix} \]

and

\[ \dot w_n = \begin{pmatrix} I_{n-1} \\ && & 1 \\ && I_{n-1} \\ & -1 \end{pmatrix} \]

(ii): Let $J$ be the $n$ by $n$ antidiagonal matrix
\[ J = \begin{pmatrix} & & & 1 \\ & & -1 \\ & \adots \\ (-1)^{n-1} \end{pmatrix} \]
let $w_{\ell}$ and $w_{\ell}^{\theta}$ be the long elements of $\mathbf G$ and $\mathbf M$, and let $w_0 = w_{\ell} w_{\ell}^{\theta}$.  These have canonical representatives
\[ \dot w_{\ell} = (-1)^{n-1}\begin{pmatrix} & I_n \\  -I_n \end{pmatrix}, \dot w_{\ell}^{\theta} = \begin{pmatrix} J \\ & J \end{pmatrix},\dot{w_0} = \begin{pmatrix} & J \\ -J \end{pmatrix} \]
\end{1_4}
The root vectors also define a canonical generic character $\chi$ of $\mathbf U(F)$ in terms of a fixed additive character $\psi$ of $F$: if $u \in \mathbf U(F)$, we can write
\[ u = \prod\limits_{\beta \in \Delta} x_{\beta}(a_{\beta}) u'\]
for $a_{\beta} \in \overline{F}$ and $u'$ in the derived group of $\mathbf U(F)$.  The sum of the $a_{\beta} : \beta \in \Delta$ lies in $F$, and we define $\chi(u) = \psi(\sum\limits_{\beta \in \Delta} a_{\beta})$.  

If $\pi$ is a generic representation of $\GL_n(E) = \mathbf M(F)$, then the Langlands-Shahidi method defines the Shahidi local coefficient $C_{\chi}(s,\pi)$ [Sh1].  The local coefficient is related to the Asai gamma factor by the formula
\[ C_{\chi}(s,\pi) = \lambda(E/F,\psi)^{n^2} \gamma(s, \pi^{\vee}, \mathscr R, \overline{\psi})\]
where $\lambda(E/F,\psi)$ is the Langlands lambda function (Theorem 8.3.2, [Sh2]).  Hence analytic stability (Proposition 2.5.8) is equivalent to:
\newtheorem{1_5}[1_1]{Proposition}
\begin{1_5} Let $\pi_1$ and $\pi_2$ be supercuspidal representations of $\GL_n(E)$ with the same central character $\omega$.  Then for all sufficiently highly ramified characters $\eta$ of $E^{\ast}$, we have
\[ C_{\chi}(s,\pi_1 \eta) = C_{\chi}(s,\pi_2 \eta)\]
where $\pi_i \eta = \pi_i(\eta \circ \operatorname{det})$.
\end{1_5}
Our approach to Proposition 3.3.3 will be to apply Shahidi's local coefficient formula (Theorem 6.2 of [Sh1]) and then the Bessel function asymptotics of [CoShTs].  However, the group $\mathbf G$ is insufficient to apply Shahidi's formula.  In Section 3.6, we will embed $\mathbf G$ in a larger group $\widetilde{\mathbf G}$ which has the same derived group, and which has connected and cohomologically trivial center.  The group $\widetilde{\mathbf G}$ satisfies the necessary properties to apply Shahidi's formula.  As we explain in Section 3.5, local coefficients only depend on the derived group, so we will be able to calculate $C_{\chi}(s,\pi)$ using $\widetilde{\mathbf G}$. 

Let $\overline{\mathbf N}$ be the unipotent radical of the parabolic opposite to $\mathbf N$. For $n \in \mathbf N(F)$, we will need an explicit decomposition of $\dot{w}_0^{-1}n \in \mathbf P(F) \overline{\mathbf N}(F)$ as in Section 4 of [Sh1].

\newtheorem{1_6}[1_1]{Lemma}

\begin{1_6} Let 
\[  n = \begin{pmatrix} I_n & X \\ & I_n \end{pmatrix} \in \mathbf N(F) \]

for $X \in \textrm{Mat}_n(E)$ and $^t\overline{X} = X$.  Then $\dot w_0^{-1}n \in \mathbf P(F)\overline{\mathbf N}(F)$ if and only if $X$
is invertible, in which case we can uniquely express $\dot w_0^{-1}n = mn'\bar{n}$, with $m \in \mathbf M(F), n' \in \mathbf N(F), \bar{n} \in \overline{\mathbf N}(F)$.  We have

\[ m = (-1)^{n-1} \begin{pmatrix} JX^{-1} \\ & JX \end{pmatrix} \, \, n' =\begin{pmatrix} I_n  & -X \\ & I_n \end{pmatrix} \, \, \bar n =  \begin{pmatrix} I_n \\ X^{-1} & I_n \end{pmatrix} \]

\end{1_6}

\begin{proof} This is essentially Lemma 2.2 of [Go].
\end{proof}

\subsection{Orbit space measures}
Shahidi's local coefficient formula expresses $C_{\chi}(s,\pi)^{-1}$ as an integral with respect to a measure on the quotient space of $\mathbf N(F)$ with respect to the action of a certain group.  In this section, we will explicitly construct the measure which we will need, and show it satisfies the required properties.  

Let $\mathbf U_{\mathbf M} = \mathbf M \cap \mathbf U$.  The group $\mathbf U_{\mathbf M}(F)$, which identifies with the upper triangular unipotent matrices of $\GL_n(E)$, acts by conjugation on $\mathbf N(F)$, which identifies with the space of $n$ by $n$ Hermitian matrices in $\operatorname{Mat}_n(E)$.  Under these identifications, the action of $\mathbf U_{\mathbf M}(F)$ on $\mathbf N(F)$ is given by
\[ u.X = uX^t\overline{u}\]

\newtheorem{2_1}{Proposition}[subsection]

\begin{2_1} Let $R$ be the set of elements in $\mathbf N(F)$ of the form

\[ \begin{pmatrix} I_n & r \\ & I_n \end{pmatrix} \]

where $r = \textrm{diag}(r_1, ... , r_n)$ is an invertible diagonal matrix with entries necessarily in $F$.  Then:

(i): The elements of $R$ lie in distinct orbits under the action of $\mathbf U_{\mathbf M}(F)$, each with trivial stabilizer.

(ii): The disjoint union $W$ of the orbits $\mathbf U_{\mathbf M}(F).r$ for $r \in R$ is an open dense subset of $\mathbf N(F)$.  

(iii): The map $\mathbf U_{\mathbf M}(F) \times R \rightarrow W$ given by $(u,r) \mapsto u.r$ is an isomorphism of analytic manifolds.  In particular, the map $W \rightarrow R$ sending $n$ to the unique element of $R$ lying in the same orbit is a submersion of manifolds, so $R$ is the quotient of $W$ under the action of $\mathbf U_{\mathbf M}(F)$ in the category of analytic manifolds.

(iv): Identifying $R$ with $(F^{\ast})^n$, we place the measure $dr = \prod\limits_{i=1}^n |r_i|_E^{i-1} \, dr_i$ on $R$.  Then integration over $\mathbf N(F)$ can be recovered by integration over $D$:

\[ \int\limits_{\mathbf N(F)} f(n) \, dn = \int\limits_{R} \int\limits_{\mathbf U_{\mathbf M}(F)} f(u.r) \, du \, dr \tag{$f \in \mathscr C_c^{\infty}(\mathbf N(F))$}\]

\end{2_1}

\begin{proof} Assume first that $n = 2$.  Then (i), (ii), and (iii) are straightforward to verify.  For (iv), we identify $\mathbf U_{\mathbf M}(F)$ with $E$, and $\mathbf N(F)$ with $F \times E \times F$.  Then we have for $r = \textrm{diag}(r_1,r_2) \in D$ and $f \in \mathscr C_c^{\infty}(\mathbf N(F))$,
\begin{equation*}
\begin{split}
\int\limits_{R} \int\limits_{\mathbf U_{\mathbf M}(F)} f(u.r) \, du \, dr & = \int\limits_F \int\limits_{F^{\ast}} \int\limits_E f(r_1 + x\overline{x},xr_2,r_2)|r_2|_E \, dx \, dr_2 \, dr_1 \\
& = \int\limits_{F^{\ast}} \int\limits_E \int\limits_F f(r_1 + x\overline{x},xr_2,r_2)|r_2|_E \, dr_1 \, dx \, dr_2 \\
& =  \int\limits_{F^{\ast}} \int\limits_E \int\limits_F f(r_1,xr_2,r_2)|r_2|_E \, dr_1 \, dx \, dr_2 \\
& = \int\limits_{F^{\ast}} \int\limits_E \int\limits_F f(r_1,x,r_2)\, dr_1 \, dx \, dr_2 \\
& = \int\limits_{\mathbf N(F)} f(n) \, dn
\end{split}
\end{equation*} 

From the second to the third line, we have used the translation invariance of the measure $dr_1$ on $F$.  For the third to the fourth line, we have used the fact that $\int\limits_E F(rx) dx = |r|_E^{-1} \int\limits_E F(x) dx$ for any Haar measure $dx$ on $E$.  Finally we use the fact that integration over $F^{\ast}$ is the same as integration over $F$.

We then proceed by induction on $n$.  Suppose we have verified that $R$ and $W$ work for a given $n$.  We want to prove the proposition for the corresponding sets $R^{\ast}$ and $W^{\ast}$ of size $n+1$.  Consider the dense open set $O$ of those matrices $X \in \mathbf N(F)$ (of size $n+1$) whose lower right entry $x$ is nonzero.  Write

\[ X = \begin{pmatrix} X_0 & \alpha \\ ^t \overline{\alpha} & x \end{pmatrix} \]

for $X_0$ Hermitian of size $n$ and $\alpha$ a column vector.  Let

\[ u = \begin{pmatrix} I_{n} & -x^{-1} \alpha \\ & 1 \end{pmatrix} \]

so that 

\[ un^t \overline{u} = \begin{pmatrix} X_0 - x^{-1} \alpha ^t\overline{\alpha} & 0 \\ 0 & x \end{pmatrix} \]

This procedure allows us to descend to size $n$ and utilize our induction hypothesis.  One checks that $W^{\ast}$ consists of those $X$ for which $X_0 - x^{-1} \alpha ^t\overline{\alpha}$ lies in $W$.  The map $X \mapsto X_0 - x^{-1} \alpha ^t\overline{\alpha}$ on $O$ is easily seen to be a submersion, making it in particular an open map.  Hence $W^{\ast}$ is dense by induction, giving us (ii).  The other properties (i), (iii), and (iv) are also proved by induction using this method of descent.  
\end{proof}

We also have an action of $F^{\ast}$ on $\mathbf N(F)$ by scaling each entry.  This actions commutes with that of $\mathbf U_{\mathbf M}(F)$, so we have an action of $F^{\ast} \times \mathbf U_{\mathbf M}(F)$ on $\mathbf N(F)$.  Let $R'$ be the set of invertible diagonal matrices of the form $\textrm{diag} (1, r_2, ... , r_n)$ in $\mathbf N(F)$.  Define a measure on $F^{\ast}$ by $|z|_F^{n^2} d^{\ast} z = |z|_F^{n^2-1} dz$ and a measure $dr'$ on $R' = (F^{\ast})^{n-1}$ by

\[ dr' = \prod\limits_{i=2}^n |r_i|_E^{i-1} dr_i = \prod\limits_{i=2}^n |r_i'|_F^{2i-1} d^{\ast}r_i'\]

Then we see immediately that $R'$ is the quotient of $R$ under the action of $F^{\ast}$, and that integration over $R$ can be recovered by integration over $R'$ and $F^{\ast}$:

\[ \int\limits_{R} f(r) \, dr = \int\limits_{R'} \int\limits_{F^{\ast}} f(z.r')|z|_F^{n^2-1} \, dz \, dr' \]
Putting this together with Proposition 3.4.1, we have:

\newtheorem{2_2}[2_1]{Proposition}

\begin{2_2} $R'$ is the quotient of an open dense subset of $\mathbf N(F)$ under the action of $F^{\ast} \times \mathbf U_{\mathbf M}(F)$, and for $f \in \mathscr C_c^{\infty}(F)$, integration over $\mathbf N(F)$ can be recovered by integration over $R'$ and $F^{\ast} \times \mathbf U_{\mathbf M}(F)$:

\[ \int\limits_{\mathbf N(F)} f(n) \, dn = \int\limits_{R'} \int\limits_{F^{\ast}} \int\limits_{\mathbf U_{\mathbf M}(F)} f(u.(zr')) |z|_F^{n^2} \, du \, d^{\ast}z \, dr'\]

\end{2_2}

\subsection{Reductive groups sharing the same derived group}

Consider a connected, reductive group $\widetilde{\mathbf G}$ over $F$ which contains $\mathbf G$ and shares its derived group.  In this section only, we will denote the group of rational points of a group $\mathbf H$ by the corresponding letter $H$.  We will not do this in general, because later on we will need to consider the group $U$ of upper triangular unipotent matrices in $\GL_n(E)$, and we do not want to confuse this with the group $\mathbf U(F)$ introduced earlier.

\newtheorem{3_1}{Lemma}[subsection]

\begin{3_1}Let $Z_{\widetilde{G}}$ be the center of $\widetilde{G}$.  Let $\pi$ be an irreducible, admissible representation of $G$, and $\omega$ a character of $Z_G$.

(i): $\omega$ can be extended to a character $\tilde{\omega}$ of $Z_{\widetilde{G}}$.  

(ii): If $\pi$ is an irreducible, admissible representation of $G$, then there exists an irreducible representation $\tilde{\pi}$ of $\widetilde{G}$ whose restriction to $G$ contains $\pi$ as a subrepresentation.

(iii): If $\pi$ has central character $\omega$, then $\tilde{\pi}$ can be chosen to have central character $\tilde{\omega}$.
\end{3_1}

\begin{proof} (i): The groups $Z_G$ and $Z_{\widetilde{G}}$ each have unique maximal compact open subgroups $K$ and $\tilde{K}$, with $K = Z_G \cap \tilde{K}$.  The restriction of $\omega$ to $K$ is unitary, and then extends by Pontryagin duality to a character $\tilde{\omega}$ of $\tilde{K}$.  We then extend $\tilde{\omega}$ to a character of $Z_G \tilde{K}$ by setting $\tilde{\omega}(zk) = \tilde{\omega}(z) \omega(k)$.  This is well defined, and moreover continuous since the product map $Z_G \times \tilde{K} \rightarrow Z_{\widetilde{G}}$ is an open map.  Since $\mathbb C^{\ast}$ is injective in the category of abelian groups, $\tilde{\omega}$ extends to an abstract homomorphism of $Z_{\widetilde{G}}$ into $\mathbb C^{\ast}$.  This extension is automatically continuous, because its restriction to the open subgroup $\tilde{K}$ is continuous.

(ii) and (iii): We first extend $\pi$ to a representation of $Z_{\widetilde{G}}G$ by setting $\pi(zg) = \tilde{\omega}(z)\pi(g)$.  This is smooth and admissible.  Since $Z_{\widetilde{G}}G$ is of finite index in $\widetilde{G}$, the smoothly induced representation $\sigma = \Ind_{Z_{\widetilde{G}}G}^{\widetilde{G}} \pi$ is admissible.  

Any irreducible subrepresentation of $\sigma$ is easily seen to have central character $\tilde{\omega}$.  Take a nonzero element in the space of $\sigma$ and consider the $\widetilde{G}$-subrepresentation $W$ which it generates.  Since $W$ is finitely generated and admissible, it is of finite length, and must contain an irreducible subrepresentation $W_0$.  

Now the restriction of $W_0$ to $G$ is a finite direct sum of irreducible representations of $G$ ([Tad], Lemma 2.1).  Since the map $f \mapsto f(1)$ defines a nonzero intertwining operator from $W_0$ to the space of $\pi$, we see that one of these irreducible representations must be isomorphic to $\pi$.
\end{proof}

The group $\widetilde{\mathbf G}$ is essentially the same as the group $\mathbf G$, but with a larger maximal torus $\widetilde{\mathbf T}$, which we may take to be one containing $\mathbf T$.  It has a maximal split torus $\widetilde{\mathbf S}$ inside $\widetilde{\mathbf T}$.  It has a Borel subgroup $\widetilde{\mathbf B} = \widetilde{\mathbf T} \mathbf U$, which defines a set of simple restricted and nonrestricted roots identifiable with those corresponding to the triple $\mathbf G, \mathbf B, \mathbf S$.  The root vectors can be taken from $\mathbf U$, giving us the exact same canonical Weyl group representatives and generic character $\chi$ as before.  

Let $\widetilde{\mathbf P} = \widetilde{\mathbf M} \mathbf N$ be the maximal self-associate parabolic subgroup of $\widetilde{\mathbf G}$ corresponding to $\mathbf P$ (that is, defined by the same set of simple roots), with $\widetilde{\mathbf M}$ containing $\widetilde{\mathbf T}$.  Then $\widetilde{\mathbf M}$ and $\mathbf M$ also have the same derived group, so we can apply Lemma 3.5.1.  We will make the following convention: for each character $\omega$ of $Z_M$, \emph{choose once and for all} a character $\tilde{\omega}$ of $Z_{\widetilde{M}}$ which extends $\omega$.

\newtheorem{3_2}[3_1]{Lemma}

\begin{3_2} Let $\pi$ be a supercuspidal representation of $M = \GL_n(E)$ with central character $\omega_{\pi}$.  Let $\tilde{\pi}$ be a representation of $\widetilde{M}$ whose restriction to $M$ contains $\pi$ as a subrepresentation, and whose central character is the given extension $\tilde{\omega}_{\pi}$ of $\omega_{\pi}$.

(i): $\tilde{\pi}$ is generic, and $C_{\chi}(s,\pi) = C_{\chi}(s, \tilde{\pi})$.

(ii): Let $W$ be an element of the Whittaker model of $\pi$.  Then $W$ extends to an element $\tilde{W}$ in the Whittaker model of $\tilde{\pi}$.

\end{3_2}

\begin{proof} (i): Let $\lambda$ be a nonzero $\chi$-Whittaker functional for $\pi$.  Let $V$ be the underlying space of $\tilde{\pi}$.  The restriction of $\tilde{\pi}$ to $M$ is a finite direct sum of irreducible representations, say $V = V_1 \oplus \cdots \oplus V_r$, with  $\pi = V_1$.  Then the map $\tilde{\lambda}: (v_1, ... , v_r) \mapsto \lambda(v_1)$  is a nonzero Whittaker functional for $\tilde{\pi}$.

Consider the induced representations $I(s,\pi)$ and $I(s,\tilde{\pi})$ of $G$ and $\widetilde{G}$.  If $0 \neq f \in I(s,\tilde{\pi})$, then $f$ is a function from $\widetilde{G}$ to $V$, so we may write $f = (f_1, ... , f_t)$.  We see immediately that the restriction of $f_1$ to $G$ is a nonzero element of $I(s,\pi)$.  Considering the intertwining operators $A(s,\pi)$ and $A(s,\tilde{\pi})$, and the Whittaker functionals $\lambda_{\chi}(s,\pi)$ and $\lambda_{\chi}(s,\tilde{\pi})$, both defined by integration over $\mathbf N(F)$ with the same Weyl group representative $\dot{w}_0$, we see by direct computation that the local coefficient $C_{\chi}(s,\tilde{\pi})$ satisfies
\[ C_{\chi}(s,\tilde{\pi}) \lambda_{\chi}(-s,\pi) \circ A(s,\pi)f_1 = \lambda(s,\pi)f_1 \]
making it equal to $C_{\chi}(s,\pi)$.

(ii): There exists an element $v$ in the space of $\pi$ such that $W(m) = \lambda(\pi(m)v)$.  We simply define $\tilde{W}(\tilde{m}) = \tilde{\lambda}(\tilde{\pi}(\tilde{m})v)$.

\end{proof}
\subsection{The extended group $\widetilde{\mathbf G}$}
We will now construct the group $\widetilde{\mathbf G}$ of the previous section.  The goal is to construct a connected, reductive group $\widetilde{\mathbf G}$ over $F$ which contains $\mathbf G$, shares its derived group, and has connected and cohomologically trivial center.  Then Assumption 5.1 of [Sh1] will be valid for $\widetilde{\mathbf G}$, allowing us to construct the injection $\alpha^{\vee}$ of $F^{\ast}$ into $Z_{\widetilde{\mathbf M}(F)}/Z_{\widetilde{\mathbf G}(F)}$ as in Section 5 of [Sh1].  This will allow us to apply Shahidi's local coefficient formula for $\widetilde{\mathbf G}$.

First, define $\widetilde{Z_{\mathbf G}} = \operatorname{Res}_{E/F} Z_{\mathbf G}$.  We can identify the $\overline{F}$-points of this group with $\overline{F}^{\ast} \times \overline{F}^{\ast}$, and for $z = (x,y) \in \widetilde{Z_{\mathbf G}}(\overline{F})$, and $\gamma \in \Gal(\overline{F}/F)$, we have $\gamma.z = (\gamma(x),\gamma(y))$ if $\gamma|_E = 1_E$, and $\gamma.z = (\gamma(y), \gamma(x))$ if $\gamma|_E = \sigma$.  

We embed $Z_{\mathbf G}$ into $\widetilde{Z_{\mathbf G}}$ on $\overline{F}$-points by sending $x \in \overline{F}^{\ast}$ to $(x,x^{-1})$, where we identify $xI_{2n}$ with $x$.  This embedding is defined over $F$.  Let $\mathbf K$ be the finite group scheme $\mathbf G_{\der} \cap Z_{\mathbf G}$, where we have $\mathbf G_{\der} = \operatorname{SU}(n,n)$.  The product map $\mathbf G_{\der} \times_F Z_{\mathbf G} \rightarrow \mathbf G$ induces an isomorphism of algebraic groups

\[ \frac{\mathbf G_{\der} \times_F Z_{\mathbf G}}{\mathbf K} \rightarrow \mathbf G \]

which is defined over $F$.  Here we are regarding $\mathbf K$ as a subgroup scheme of $\mathbf G_{\der} \times_F Z_{\mathbf G}$ on closed points by $x \mapsto (x,x^{-1})$.    Since $\mathbf K \subset Z_{\mathbf G} \subset \widetilde{Z_{\mathbf G}}$, we may define in the same way a group $\widetilde{\mathbf G}$ by

\[ \widetilde{\mathbf G} = \frac{\mathbf G_{\der} \times_F \widetilde{Z_{\mathbf G}}}{\mathbf K} \]

This group contains $\mathbf G_{\der}, Z_{\mathbf G},$ and $\mathbf G$ as subgroup schemes, and by passing to $\overline{F}$-points, we immediately arrive at the following proposition.

\newtheorem{4_1}{Proposition}[subsection]

\begin{4_1} $\widetilde{\mathbf G}$ is a connected, reductive group over $F$.  Its derived group is $\mathbf G_{\der}$.  The center of $\widetilde{\mathbf G}$ is $\widetilde{Z_{\mathbf G}}$, and  

\[ \widetilde{\mathbf T} = \frac{\mathbf T^D \times_F \widetilde{Z_{\mathbf G}}}{\mathbf K} \]

is a maximal torus of $\widetilde{\mathbf G}$ which contains $\mathbf T$ and is defined over $F$.  Here $\mathbf T^D$ is the usual maximal torus of $\mathbf G_{\der}$.  

\end{4_1}

For the self-associate maximal parabolic subgroup $\widetilde{\mathbf P} = \widetilde{\mathbf M} \mathbf N$ analoguous to $\mathbf P$ and $\mathbf M$, we have 

\[ \widetilde{\mathbf M} = \frac{ \mathbf M^D \times_F \widetilde{Z_{\mathbf G}}}{\mathbf K} \]

where $\mathbf M^D$ is the Levi subgroup of $\mathbf G_{\textrm{der}}$ analagous to $\mathbf M$.  The group $\widetilde{\mathbf M}$ has center 

\[ Z_{\widetilde{\mathbf M}} = \frac{Z_{\mathbf M^D} \times_F \widetilde{Z_{\mathbf G}}}{\mathbf K} \]
Note that the torus $Z_{\widetilde{\mathbf G}} = \widetilde{Z_{\mathbf G}}$ is cohomologically trivial by Shapiro's lemma, so the inclusion \[    Z_{\widetilde{\mathbf M}(F)}/Z_{\widetilde{\mathbf G}(F)} =Z_{\widetilde{\mathbf M}}(F)/Z_{\widetilde{\mathbf G}}(F)\subseteq Z_{\widetilde{\mathbf M}}/Z_{\widetilde{\mathbf G}}(F)\]

is an equality.  Note that if $\mathbf H$ is a reductive group over a field $k$, then $Z_{\mathbf H}(k) = Z_{\mathbf H(k)}$.  We will require a simple lemma on tori.  We omit the proof.

\newtheorem{4_3}[4_1]{Lemma}

\begin{4_3} Identify all groups with their closed points.  Let $\mathbf H$ be the subtorus $(x,x^{-1},x^{-1},x)$ of  $\mathbf G_m^4$, and $\mathbf K$ a finite subgroup of $\mathbf H$ containing $c = (-1,-1,-1,-1)$.  Choose for each $0 \neq x \in \overline{F}$ a square root $\sqrt{x}$, so that 

\[ x \mapsto (\sqrt{x}, \frac{1}{\sqrt{x}}, \frac{1}{\sqrt{x}}, \sqrt{x}) \mathbf K \]

is a well defined homomorphism of abstract groups $\mathbf G_m \rightarrow \mathbf H/\mathbf K$, independent of the choice of $\sqrt{x}$ for any $x$.  Then this homomorphism is a morphism of varieties.  
\end{4_3}

Now, we are going to construct the required injection $\alpha^{\vee}$ of $F^{\ast}$ into $Z_{\widetilde{\mathbf M}}(F)/Z_{\widetilde{\mathbf G}}(F)$ as in Section 5 of [Sh1].  Let $\mathbf L = Z_{\mathbf M^D} \times_F \widetilde{Z_{\mathbf G}}$.  Since

\[ Z_{\mathbf M^D}(\overline{F}) = \{ \begin{pmatrix} xI_n \\ & x^{-1}I_n \end{pmatrix} : x \in \overline{F}^{\ast} \} \]

we can identify $\mathbf L(\overline{F})$ with the three dimensional torus $(x,x^{-1},y,z)$.  For the corresponding group $\mathbf K$, we then identify $\mathbf K(\overline{F}) = \{ (x,x^{-1},x^{-1},x): x^{2n} = 1 \}$.  Then

\[ Z_{\widetilde{\mathbf M}}(\overline{F}) = \mathbf L/\mathbf K(\overline{F}) = \frac{\mathbf L(\overline{F})}{\mathbf K(\overline{F})} = \{ (x,x^{-1},y,z) \mathbf K(\overline{F}) : x, y, z \in \overline{F}^{\ast} \} \]

\newtheorem{4_4}[4_1]{Proposition}

\begin{4_4} For each $x \in \overline{F}^{\ast}$, choose once and for all a square root $\sqrt{x}$.  Define a map $\mathbf G_m(\overline{F}) \rightarrow Z_{\widetilde{\mathbf M}}(\overline{F})$ by 

\[ x \mapsto (\sqrt{x}, \frac{1}{\sqrt{x}}, \frac{1}{\sqrt{x}}, \sqrt{x})\mathbf K(\overline{F})\]

Then this is the map on closed points defined by a cocharacter $\lambda$ of $Z_{\mathbf M}$.  It satisfies $\langle \beta, \lambda \rangle = 1$ for the unique $\beta \in \widetilde{\Delta}$ restricting to $\alpha = \epsilon_{n-1} - \epsilon_n$, and $\langle \beta, \lambda \rangle = 0$ for $\beta \in \Delta$ not restricting to $\alpha$.  The composition 

\[ \mathbf G_m \rightarrow Z_{\widetilde{\mathbf M}} \rightarrow Z_{\widetilde{\mathbf M}}/Z_{\widetilde{\mathbf G}} \]

maps $F$ rational points to $F$-rational, and therefore defines an injection 

\[\alpha^{\vee}: F^{\ast} \rightarrow Z_{\widetilde{\mathbf M}}/Z_{\widetilde{\mathbf G}}(F)\]

\end{4_4}

\begin{proof} Note that $(-1,-1,-1,-1) \in \mathbf K(\overline{F})$, so by the previous lemma, $\lambda$ is a well defined cocharacter.  It clearly pairs with the nonrestricted simple roots in the manner described.  We finally have to check that if $x \in F^{\ast}$, then the image of $\lambda(x)$ in $Z_{\mathbf G}(\overline{F})$ is an $F$-rational point.    

The torus $Z_{\widetilde{\mathbf M}}$ splits over $E$, so all its cocharacters are defined over $E$.  The projection $Z_{\widetilde{\mathbf M}} \rightarrow Z_{\widetilde{\mathbf M}}/Z_{\widetilde{\mathbf G}}$ is also defined over $E$.  So we just need to check that if $\tau \in \Gal(\overline{F}/F)$, and $\tau|_{E} \neq 1_E$, then $\tau$ fixes the image of $\lambda(x)$ modulo $Z_{\widetilde{\mathbf G}}(\overline{F})$.  First, using the fact that $\tau( \sqrt{x}) = \pm \sqrt{x}$, that $(-1,-1,-1,-1) \in \mathbf K(\overline{F})$, and that $\tau$ acts on $\mathbf M^D(\overline{F})$ by $\tau.(x,y) = (\tau(y)^{-1}, \tau(x)^{-1})$ we get 

\begin{equation*}
    \begin{split}
         \tau.\lambda(x) & = \pm( \sqrt{x}, \frac{1}{\sqrt{x}}, \sqrt{x},  \frac{1}{\sqrt{x}}) \mathbf K(\overline{F}) \\ & = (\sqrt{x}, \frac{1}{\sqrt{x}}, \sqrt{x}, \frac{1}{\sqrt{x}}) \mathbf K(\overline{F})  
         \end{split}
         \end{equation*}
Next, $Z_{\widetilde{\mathbf G}}(\overline{F})$ embeds into $Z_{\widetilde{\mathbf M}}(\overline{F})$ as $(x,y) \mapsto (1,1,x,y)\mathbf K(\overline{F})$.  For $\lambda(x)$ modulo $Z_{\widetilde{\mathbf G}}(\overline{F})$ to be an $F$-rational point, it suffices to show that $\tau.\lambda(x)$ is congruent to $\lambda(x)$ modulo $Z_{\widetilde{\mathbf G}}(\overline{F})$.  And this is the case, using the element

\[ (1,1,\frac{1}{x},x) \mathbf K(\overline{F}) \]

and the fact that $\frac{\sqrt{x}}{x} = \frac{1}{\sqrt{x}}$ for any $x \in \overline{F}^{\ast}$ and any choice of square root of $x$.
\end{proof}

\subsection{The local coefficient as a partial Bessel integral}

We will need a nice collection of open compact subgroups $\overline{N}_{\kappa}, \kappa \in \mathbb Z$ of $\overline{\mathbf N}(F)$ for the proof of stability.  Note that $\overline{\mathbf N}(F)$, like $\mathbf N(F)$, identifies with the space of Hermitian matrices in $\operatorname{Mat}_n(E)$.

For $\pi$ an irreducible, admissible representation of $\mathbf M(F) = \GL_n(E)$ with central character $\omega_{\pi}: E^{\ast} = Z_{\mathbf M(F)} \rightarrow \mathbb{C}^{\ast}$, let $f$ be the conductor of $\omega_{\pi}|_{F^{\ast}}$.  Also, let $d$ be the conductor of the additive character $\psi$ of $F$, which was fixed once and for all.  We have a collection of open  compact neighborhoods of the identity in $\operatorname{Mat}_n(E)$ whose union is the entire space:

\[ X(\kappa) =  \begin{pmatrix} (\varpi_F)^{-\kappa} & (\varpi_F)^{-2\kappa} & (\varpi_F)^{-3\kappa} & \cdots \\ (\varpi_F)^{-2\kappa} &  (\varpi_F)^{-3\kappa} \\ (\varpi_F)^{-3\kappa} & & \ddots \\  \vdots\end{pmatrix} \]

where $\varpi_F$ is a uniformizer for $F$, and $(\varpi_F) = \varpi_F \mathcal O_E$.  Of course $(\varpi_F) = \varpi_E \mathcal O_E$ if $E/F$ is not ramified, and $(\varpi_F) = \varpi_E^2 \mathcal O_E$ if $E/F$ is ramified.  Equivalently,

\[ X(\kappa) = \{ x \in \operatorname{Mat}_n(E) : x_{ij} \in (\varpi_F)^{-\kappa( i + j -1)} \} \]

We let 

\[ \overline{N}_{ \kappa} = \{ X \in \overline{\mathbf N}(F) : \varpi_F^{-d-f}X \in X(\kappa) \} \]

Although the indexing of our open compact subgroups depends on $\pi$, the total sequence does not.  We will let $\varphi_{\kappa}$ be the characteristic function of $X(\kappa)$.

\newtheorem{G1}{Lemma}[subsection]

\begin{G1}For $t \in F^{\ast}$, $\alpha^{\vee}(t) \overline{N}_{\kappa} \alpha^{\vee}(t)^{-1}$ only depends on $|t|_F$.

\end{G1}

\begin{proof} Let $t \in F^{\ast}$.  Then $\alpha^{\vee}(t)$ is an element of $Z_{\widetilde{\mathbf M}(F)}$ which is only well defined up modulo $Z_{\widetilde{\mathbf G}(F)}$.  However, conjugation by $\alpha^{\vee}(t)$ is well defined, and coincides with conjugation by the $E$-rational point
\[ \begin{pmatrix} tI_n \\ & I_n \end{pmatrix} \]
so we see that conjugation by $\alpha^{\vee}(t)$ of an element $X$ of $\overline{\mathbf N}(F)$, identified with a Hermitian matrix, produces the Hermitian matrix $t^{-1}X$.  
\end{proof}

Recall that in Section 3.5, we choose once and for all an extension of each character $\omega$ of $Z_{\mathbf M(F)}$ to a character $\tilde{\omega}$ of $Z_{\widetilde{\mathbf M}(F)}$.  Let $\pi$ be an supercuspidal representation of $\mathbf M(F) = \GL_n(E)$.  Let $\omega$ be the central character of $\pi$.  By the results of Section 3.5, there exists a generic representation $\widetilde{\pi}$ of $\widetilde{\mathbf M}(F)$, having central character $\tilde{\omega}$, such that $\pi$ is isomorphic to a subrepresentation of $\widetilde{\pi}|_{\mathbf M(F)}$, and the local coefficient $C_{\chi}(s, \widetilde{\pi})$ (relative to $\widetilde{\mathbf M}(F)$ inside $\widetilde{\mathbf G}(F)$) is equal to $C_{\chi}(s,\pi)$.  

The central character $\tilde{\omega}_{\widetilde{\pi}_s}$ of $\widetilde{\pi}_s = \widetilde{\pi}  q^{\langle s \tilde{\alpha}, H_{\widetilde{\mathbf M}}(-) \rangle}$ is equal to $\tilde{\omega}  q^{\langle s \tilde{\alpha}, H_{\mathbf M}(-) \rangle}$.  Let us first compute the character $\tilde{\omega}(\dot w_0 \tilde{\omega}^{-1})$ (Section 6 of [Sh1]) of $F^{\ast}$ which is defined by
\[ \tilde{\omega}(\dot w_0 \tilde{\omega}^{-1})(t) = \tilde{\omega}(\alpha^{\vee}(t)\dot w_0^{-1}\alpha^{\vee}(t)\dot w_0)\]
This is well defined as a character of $F^{\ast}$, even though $\alpha^{\vee}(t) \in Z_{\widetilde{\mathbf M}}(F)$ is only well defined modulo $Z_{\widetilde{\mathbf G}}(F)$.

\newtheorem{G2}[G1]{Lemma}

\begin{G2} Let $t \in F^{\ast}$.  Identifying $\mathbf M(F) = \GL_n(E)$, we have

\[ \tilde{\omega}(\dot w_0 \tilde{\omega}^{-1})(t) = \omega(tI_n) \]

\end{G2}

In particular, $\tilde{\omega}(\dot w_0 \tilde{\omega}^{-1})$ does not depend on the choice of character $\tilde{\omega}$ extending $\omega$, and $\tilde{\omega}(\dot w_0 \tilde{\omega}^{-1})$ is ramified if and only if $\omega: E^{\ast} = Z_{\mathbf M(F)} \rightarrow \mathbb{C}^{\ast}$ is ramified.

\begin{proof} Choose any square root $\sqrt{t} \in \overline{F}^{\ast}$ of $t$, and define

\[ z = [\begin{pmatrix} \frac{1}{\sqrt{t}} I_n \\ & \sqrt{t}I_n \end{pmatrix}, (\sqrt{t}, \frac{1}{\sqrt{t}})] \mathbf K(\overline{F}) \in Z_{\widetilde{\mathbf M}}(\overline{F}) \]

  Let $z_0 \in Z_{\widetilde{\mathbf M}(F)}$ be a representative modulo $Z_{\widetilde{\mathbf G}(F)}$ of $\alpha^{\vee}(t)$.  The definition of $\tilde{\omega}(\dot w_0 \tilde{\omega}^{-1})(t)$ is

\[ \tilde{\omega}(z_0\dot w_0^{-1}z_0 \dot w_0) \]

By the definition of $\alpha^{\vee}(t)$, there exists a $g \in Z_{\mathbf G}(\overline{F})$ such that $z = z_0g$.  Then $z_0\dot w_0^{-1}z_0 \dot w_0 = z \dot w_0^{-1}z\dot w_0$, with 

\[ z \dot w_0^{-1}z \dot w_0 = [\begin{pmatrix} tI_n \\ & t^{-1}I_n \end{pmatrix}, (1,1)]\mathbf K(\overline{F}) \]

which lies in $Z_{\mathbf M(F)}$ and identifies with the matrix $tI_n$ in the center of $\GL_n(E)$.\end{proof}

Let $\mathbf Z^0$ be the isomorphic image of $F^{\ast}$ under the homomorphism $\alpha^{\vee}$, and let $z \in \mathbf Z^0$.  Let $n$ be an element of the open dense subset $W$ of $\mathbf N(F)$ defined in Proposition 3.4.1, so that the stabilizer $\mathbf U_{\mathbf M,n}(F)$ of $n$ under conjugation by $\mathbf U_{\mathbf M}(F)$ is trivial.  Write $\dot w_0^{-1}n = mn'\bar{n}$ as in Lemma 3.3.4.  We remark that we have $\mathbf U_{\mathbf M,n}(F) = \mathbf U_{\mathbf M,m}(F)' = 1$, where $\mathbf U_{\mathbf M,m}'$ is as in Section 3 of [Sh1].  Hence Assumption 4.1 of [Sh1] is satisfied.  

Let $\tilde{W}$ be an element of the Whittaker model of $\widetilde{\pi}_s$ with $\tilde{W}(e) = 1$.  We define the partial Bessel integral
\[ j_{\overline{N}_{\kappa}, \tilde{W}}(m,z) = \int\limits_{\mathbf U_{\mathbf M}(F)} \tilde{W}(mu) \phi_{\kappa}(u^{-1}z\bar{n}z^{-1}u)\overline{\chi(u)}\, du \]
where $\phi_{\kappa}$ is the characteristic function of $\overline{N}_{\kappa}$.  We can write $\dot{w_0}^{-1}\bar{n}\dot w_0 = n_1$ for $n_1 \in \mathbf N(F)$, so that
\[ n_1 = \prod\limits_{\beta \in \Delta} \mathbf x_{\beta}(x_{\beta}) \, n''\]
for $x_{\beta} \in \overline{F}$ and $n''$ in the derived group of $\mathbf N(F)$.  The element $x_{\alpha} := x_{e_n - e_{2n}}$ lies in $F$, because the character $e_n - e_{2n}$ of $\mathbf T$ is defined over $F$.

Let us compute the matrices $m, n', \bar n$ and the element $x_{\alpha}$ for special $n \in \mathbf N(F)$.  Recall that both $\mathbf N(F)$ and $\overline{\mathbf N}(F)$ identify naturally with the space of Hermitian matrices with entries in $E$.  And by Lemma 3.3.4, $\dot w_0^{-1}n \in \mathbf P(F) \overline{\mathbf N}(F)$ if and only if the Hermitian matrix corresponding to $n$ is invertible.

\newtheorem{G3}[G1]{Lemma}

\begin{G3} Let $r' = \textrm{diag}(1, r_2', ... , r_n')$ be a diagonal matrix with entries in $F^{\ast}$.  Let 

\[ n = \begin{pmatrix} I_n & r' \\ & I_n \end{pmatrix} \in \mathbf N(F) \]

Then $\dot w_0^{-1}n = mn'\bar n$ with $m \in \mathbf M(F), n' \in \mathbf N(F), \bar n \in \overline{\mathbf N}(F)$, where

(i): If we identify $m$ with a matrix in $\GL_n(E)$, then $m = (-1)^{n-1}Jr'^{-1}$.

(ii): If we identify $\bar n$ with a Hermitian matrix, then $\bar n = r'^{-1}$.

(iii): The element $x_{\alpha} \in F^{\ast}$ corresponding to $n$ above is $-1$.
\end{G3}

Here $J$ is as in Lemma 3.3.2.  
\begin{proof} (i) and (ii) are immediate from the Lemma 3.3.4.  For (iii), we first need to compute $n_1 = \dot w_0^{-1} \bar n \dot w_0$.   We have 

\[ n_1 = \dot w_0^{-1} \bar n \dot w_0= (-1)^n \begin{pmatrix} & J \\ -J \end{pmatrix} \begin{pmatrix} I_n \\ r'^{-1} & I_n \end{pmatrix} \begin{pmatrix} & J \\ -J \end{pmatrix} = \begin{pmatrix} I_n & (-1)^n Jr'^{-1}J \\ & I_n \end{pmatrix}\]

where the lower right entry $x_{\alpha}$ of $(-1)^nJa'^{-1}J$ is easily seen to be $-1$.

\end{proof}

Now let $f$ be a matrix coefficient of $\pi$, and let 

\[ W^f(m) = \int\limits_{\mathbf U_{\mathbf M}(F)} f(um) \overline{\chi(u)} \, du = \int\limits_U f(xm) \overline{\chi(x)} \, dx \]

where $U$ is the group of upper triangular unipotent matrices in $\GL_n(E)$.  Then $W^f$ lies in the Whittaker model of $\pi$.  We may choose $f$ so that $W^f(e) = 1$.  By Lemma 3.5.2, $W^f$ extends to a function $\widetilde{\mathbf M}(F) \rightarrow \mathbb{C}$ in the Whittaker model of $\widetilde{\pi}$.  Also call this extension $W^f$.  Then

\[ W(\tilde m) := q^{\langle s \alpha, H_{\widetilde{\mathbf M}}(\tilde m)\rangle } W^f(\tilde m) \]

lies in the Whittaker model of $\widetilde{\pi}_s = \widetilde{\pi} q^{\langle s \tilde{\alpha}, H_{\widetilde{\mathbf M}}(-) \rangle}$.  Now that we have defined our lengthy notation, we can state Shahidi's local coefficient formula for $C_{\chi}(s,\tilde{\pi})$.

\newtheorem{G4}[G1]{Theorem}

\begin{G4} Assume that the central character of $\pi$ is ramified (so that the central character of $\widetilde{\pi}$ is also ramified).  Then the local coefficient $C_{\chi}(s,\pi) = C_{\chi}(s,\tilde{\pi})$ is equal to
\[\gamma(s) \int\limits_{\mathbf Z^0 \mathbf U_{\mathbf M}(F) \backslash \mathbf N(F)} j_{\overline{N}_{\kappa},W}(m) \widetilde{\omega}_{\widetilde{\pi}_s}(\dot w_0\widetilde{\omega}_{\widetilde{\pi}_s})(x_{\mathbf a}) q^{\langle s \tilde{\alpha}  + \rho,H_{\widetilde{\mathbf M}}(m) \rangle} d \dot n \]
for all $\kappa \in \mathbb Z$, where $\gamma(s)$ is a function depending only on the central character of $\pi$.  Here
\[j_{\overline{N}_{\kappa},W}(m) = j_{\overline{N}_{\kappa},W}(m, \alpha^{\vee}(\varpi^{d+f} x_{\alpha}))\]
where $d$ and $f$ are the conductors of $\psi$ and $\omega|_{F^{\ast}}$.
\end{G4}

We have already identified the quotient space $\mathbf Z^0 \mathbf U_{\mathbf M}(F) \backslash \mathbf N(F)$ in Section 3.4 with the torus $R'$.  Conjugation by $\mathbf Z^0$ coincides with the action of $F^{\ast}$ given there, and the measure $\dot n$ is the measure $dr'$.

\begin{proof} This is Theorem 6.2 of [Sh1].  The only new claim we make is that this is valid for all $\kappa \in \mathbb Z$, rather than for $\kappa$ sufficiently large.  In the generality in which Shahidi proved his formula, he considered the partial Bessel function $j_{\overline{N}_0,W}(m)$ for compact open subgroups $\overline{N}_0$ of $\overline{\mathbf N}(F)$ for which $\alpha^{\vee}(t)\overline{N}_0\alpha^{\vee}(t)^{-1}$ only depends on $|t|_F$.  In general, $\overline{\mathbf N}(F)$ has arbitrary such large subgroups, but they need not be arbitrarily small.  However, our subgroups $\overline{N}_{\kappa}$ can be chosen arbitrarily small, allowing us to modify the proof of Shahidi's theorem to hold for arbitrary $\overline{N}_{\kappa}$. 

Our element $W$ in the Whittaker model of $\tilde{\pi}_s$ is a map $g \mapsto \lambda(\tilde{\pi}v)$ for some vector $v$ in the space $V$ of $\tilde{\pi}$ and a Whittaker functional $\lambda$, satisfying $\lambda(v) = 1$.  Take $\kappa_0$ to be an integer sufficiently small so that $\overline{N}_{\kappa_0}$ is contained in the kernel of the character $\chi'(n^-) =  \chi(\dot{w}_0^{-1} n^- \dot w_0)$ of $\overline{\mathbf N}(F)$.  Let $f: \overline{\mathbf N}(F) \rightarrow V$ be $v$ times the characteristic function of $\overline{N}_{\kappa_0}$ divided by the measure of $\overline{N}_0$.  Now $f$ extends to an element of $I(s, \tilde{\pi})$, vanishing off of $\mathbf P(F) \overline{N}_{\kappa_0}$ and we have

\[ v = \int\limits_{\overline{N}_{\kappa}} f(n^-) \overline{\chi'(n^-)} \, dn^-\]

for all $\kappa \geq \kappa_0$.  Now for $h$ in the induced space $I(s,\pi)$ or $I(-s,w_0(\pi))$, the Whittaker functional

\[ \int\limits_{\mathbf N(F)} \langle R_{\dot w_0^{-1}}(h)(\dot w_0n), \lambda \rangle \overline{\chi(n)} \, dn = \int\limits_{\overline{\mathbf N}(F)} \langle f(\bar n_1), \lambda \rangle \overline{\chi'(\bar n_1)} \, d \bar n_1  \]

is really defined to be an integral over a suitable open compact subgroup $N_0$ of $\mathbf N(F)$ (or an open compact subgroup $\overline{N}_0 = w_0 N_0 w_0^{-1}$ or $\overline{\mathbf N}(F)$).  The subgroup $\overline{N}_0$ will depend on $h$ but not on $s$, and it will have the property that the integral remains the same if $\overline{N}_0$ is replaced by any open compact subgroup of $\mathbf N(F)$ containing $\overline{N}_0$ (Theorem 3.4.7 of [Sh2]).  Let $\overline{N}_0$ be an open compact subgroup which works for $R_{\dot w_0^{-1}}(f)$ and for $A(s,\pi)(R_{\dot w_0^{-1}}(f))$.  Let $\overline{N}_1$ be an open compact subgroup of $\mathbf N(F)$ containing $\overline{N}_0$ and $\overline{N}_{\kappa}$, for some fixed $\kappa \geq \kappa_0$.  Then by comparing integration over $\overline{N}_0, \overline{N}_1$, and $\overline{N}_{\kappa}$, we see from our choice of $f$  that our Whittaker functionals evaluated at $R_{\dot w_0^{-1}}(f)$ and $A(s,\pi)(R_{\dot w_0^{-1}}(f))$ can in fact be obtained by integration over $\overline{N}_{\kappa}$.  

The rest of the proof follows exactly as it was written in [Sh1].  Since $\kappa_0$ could be chosen arbitrarily small, we see that Shahidi's local coefficient formula will now hold for all $\kappa \in \mathbb Z$.
\end{proof}

\subsection{The partial Bessel integral}

Let $G = \GL_n(E) = \mathbf M(F)$, $B$ and $A$ the usual Borel subgroup and maximal torus of $G$, and $U$ the unipotent radical of $B$.  Let $A' = \{ (1,a_2, ... , a_n) \in A\}$, so that $A$ is the direct product of $A'$ and the center $Z$ of $G$.  If $a \in A$, let $a'$ be the element of $A'$ obtained by ``stripping off the center'' of $a$, so that $a = a'z$ for $z \in Z$.

Let $W(G)$ be the Weyl group of $G$.  For $w \in W(G)$, we keep our Weyl group representatives $\dot w$ from Section 3.3.  For $g \in G$, there is a unique $w \in W(G)$ such that $g$ lies in the Bruhat cell $C(w) = BwB$.  If $U_w^-$ is the subgroup of $U$ directly spanned by the root subgroups of those roots which are made negative by $w$, then we can write $g$ uniquely as $u_1\dot wau_2$ for $u_1 \in U, a \in A,$ and $u_2 \in U_w^-$.  For a subset $S$ of $G$ containing $Z$, define $\mathscr C_c^{\infty}(S;\omega)$ to be the space of locally constant functions $f: S \rightarrow \mathbb C$ which are compactly supported modulo $Z$ and which satisfy $f(zg) = \omega(z)f(g)$ for $z \in Z$ and $g \in G$.  

Let $f \in \mathscr C_c^{\infty}(G;\omega)$.  For example, $f$ could be a matrix coefficient of $\pi$, because $\pi$ is supercuspidal.  Define a map $W^f: G \rightarrow \mathbb C$ by

\[ W^f(g) = \int\limits_U f(xg) \overline{\chi(x)} \, dx\]

where $\chi$ is the restriction to $U = \mathbf U_{\mathbf M}(F)$ of our generic character of $\mathbf U(F)$.   This integral converges absolutely.  Now $U$ acts on $G$ on the right by $g.u = \dot w_G \, ^t \overline{u} \dot w_G^{-1}gu$, where $w_G = w_{\ell}^{\theta}$ is the long element of $G$.  Let $U_g$ be the stabilizer of a given $g \in G$ under this action, and let $\varphi$ be the characteristic function of an open compact subset of $\operatorname{Mat}_n(E)$.  We define the partial Bessel integral $B_{\varphi}^G(g,f)$ by

\[ B_{\varphi}^G(g,f) = \int\limits_{U_g \backslash U} W^f(ug) \varphi(^t \overline{u} \dot w_G^{-1}g'u) \overline{\chi(u)} \, du \]
The integral converges absolutely, on account of the fact that $f$ is compactly supported modulo $Z$, and that for a $p$-adic field $k$, the $k$-points of orbits of unipotent groups over $k$ acting on affine $k$-varieties are closed. \\

We shall now rewrite the formula in Theorem 3.7.4.  By the results of Section 3.4, may identify $\mathbf Z^0 \mathbf U_{\mathbf M}(F) \backslash \mathbf N(F)$ with the space $R'$ of matrices of the form $\textrm{diag}(1,r_2', ... , r_n')$ with $r_i' \in F^{\ast}$.  The measure $d \dot n = dr'$ is then the measure

\[ dr' = \prod\limits_{i=2}^n |r_i'|_F^{2i-1} d^{\ast} r_i' \]

where $d^{\ast} r_i'$ is the usual Haar measure $\frac{dr_i}{|r_i|_F}$ on $R' = (F^{\ast})^{n-1}$.  If $n \in \mathbf N(F)$ corresponds to $r'$, i.e.

\[ n = \begin{pmatrix} I_n & r' \\ & I_n \end{pmatrix} \]

then writing $\dot w_0^{-1}n  = mn' \bar n$, we have $m = (-1)^{n-1}Ja'^{-1} = (-1)^{n-1} \dot w_G a'^{-1}$, $\bar n = a'^{-1}$, and $x_{\alpha} = -1$ (Lemma 3.7.3).  Recall that the matrix $J$ of Section 3.3 is equal to $\dot w_G$,  the representative of the long Weyl group element in $\GL_n(E)$.

\newtheorem{S1}{Lemma}[subsection]

\begin{S1} With $n, m, \bar n$ as above, we have 

(i): $\widetilde{\omega}_{\widetilde{\pi}_s}(\dot w_0\widetilde{\omega}_{\widetilde{\pi}_s})(x_{\alpha}) = \pm 1$

(ii): $q^{\langle s \tilde{\alpha} + \rho, H_{\widetilde{\mathbf M}}(m) \rangle } =  \prod\limits_{i=2}^n |r_i'|^{-(s+n)}$

\end{S1}

\begin{proof} (i) is on account of the fact that $x_{\alpha} = -1$.  (ii) follows from Lemma 3.3.1, and the fact that the restriction of  $H_{\widetilde{\mathbf M}}$ to $\mathbf M(F)$ is $H_{\mathbf M(F)}$.  

\end{proof}

Now for $j_{\overline{N}_{\kappa},W}(m) = j_{\overline{N}_{\kappa},W}(m, \alpha^{\vee}(\varpi^{d+f} x_{\alpha})$, where $d$ and $f$ are the conductors of $\psi$ and $\omega|_{F^{\ast}}$.  Consider first $\alpha^{\vee}(\varpi^{d+f} x_{\alpha}) = \alpha^{\vee}(-\varpi^{d+f})$.  Since $\alpha^{\vee}(t) \overline{N}_{\kappa} \alpha^{\vee}(t)^{-1}$ only depends on $|t|_F$, we can ignore the sign and just work with $z = \alpha^{\vee}(\varpi^{d+f})$. We have 

\[ j_{\overline{N}_{\kappa},W}(m) = \int\limits_{\mathbf U_{\mathbf M}(F)} W(mu) \phi_{\kappa}(u^{-1}z\bar n z^{-1}u) \overline{\chi(u)} \, du \]

If we identify $\overline{\mathbf N}(F)$ with the Hermitian matrices in $\operatorname{Mat}_n(E)$, and $\mathbf U_{\mathbf M}(F)$ with the group $U$ of upper triangular unipotent matrices with entries in $E$, then $u^{-1}z \bar n z^{-1} u$ is simply

\[ \varpi^{-d-f}{^t}\overline{u} r'^{-1}u = \omega^{-d-f} {^t}\overline{u} \dot w_G^{-1} \dot w_G r'^{-1}u \]

Now

\begin{equation*}
    \begin{split}
        W(mu) = W((-1)^{n-1}\dot w_G r'^{-1}u)&  = \pm q^{\langle s \tilde{\alpha}, H_{\mathbf M}(\dot w_Gr'^{-1}u) \rangle} W^f( \dot w_G r'^{-1}u) \\
        & = \pm \prod\limits_{i=2}^n |r_i'|_F^{-s} W^f(\dot w_Gr'^{-1}u)
    \end{split}
\end{equation*}

Observing that $\varpi^{-d-f} {^t}\overline{u}\dot w_G^{-1} \dot w_G r'^{-1}u$ lies in $\overline{N}_{\kappa}$ if and only if \[^t\overline{u} r'^{-1}u = {^t}\overline{u} \dot w_G^{-1} \dot w_G r'^{-1}u\]lies in $X(\kappa)$, we then have

\begin{equation*}
    \begin{split}
        j_{\overline{N}_{\kappa},W}(m) & = \pm \prod\limits_{i=2}^n |r_i'|^{-s} \int\limits_U W^f(\dot w_G r'^{-1}u) \varphi_{\kappa}(^t\overline{u} \dot w_G^{-1} \dot w_G r'^{-1}u) \overline{\chi(u)} \, du \\
        & = \pm \prod\limits_{i=2}^n |r_i'|^{-s} B_{\varphi_{\kappa}}^G(\dot w_Gr'^{-1},f)
    \end{split}
\end{equation*}

where $\varphi_{\kappa}$ is the characteristic function of $X(\kappa)$ (not $\overline{\mathbf N}_{\kappa}$).  Absorbing the $\pm 1$ into $\gamma$, and combining everything together, we get

\[ C_{\chi}(s,\pi)^{-1} = \gamma(s) \int\limits_{R'} B_{\varphi_{\kappa}}^G(\dot w_Gr'^{-1},f) \prod\limits_{i=2}^n |r_i'|^{-2s+n+2i-1} d^{\ast}r_i' \]

Finally making the change of variables $r' \mapsto r'^{-1}$, we arrive at the following

\newtheorem{S2}[S1]{Proposition}

\begin{S2} Let $\pi$ be an irreducible, supercuspidal representation of $\mathbf M(F) = \GL_n(E)$ with central character $\omega$.  Let $f$ be a matrix coefficient of $\pi$.  If $\omega$ is ramified, then there exists a function $\gamma = \gamma_{\omega}$ depending only on the central character $\omega$ and not on $\pi$, such that 

\[ C_{\chi}(s,\pi)^{-1} = \gamma(s) \int\limits_{R'} B_{\varphi_{\kappa}}^G(\dot w_Gr',f) \prod\limits_{i=2}^n |r_i'|^{2s-n-2i+1} d^{\ast}r_i'  \]

for all $\kappa \in \mathbb Z$. \end{S2}

Assume that $\pi$ is an irreducible supercuspidal representation of $\mathbf M(F) = \GL_n(E)$, whose central character $\omega$ is not necessarily ramified.  Assume that $\eta$ is a characer of $E^{\ast}$. Let $f$ be a matrix coefficient of $\pi$ such that $W^f(e) = 1$.  Then $f(g)\eta(g)$ is a matrix coefficient of $\pi  \eta$ with $W^{f\eta}(e) = 1$.  Clearly $B_{\varphi_{\kappa}}^G(\dot w_G r',f\eta) = \eta(r') B_{\varphi_{\kappa}}^G(\dot w_Gr',f)$, so applying Proposition 3.8.2 to $\pi \eta$, we get:

\newtheorem{S3}[S1]{Proposition}

\begin{S3} Let $\pi$ be an irreducible, supercuspidal representation of $\mathbf M(F) = \GL_n(E)$ with central character $\omega$.  Let $f$ be a matrix coefficient of $\pi$.  For all characters $\eta$ of $E^{\ast}$ such that $\omega \eta^n$ is ramified, there exists a function $\gamma = \gamma_{\eta}$ depending only on the character $\omega \eta^n$ and not on $\pi$, such that

\[ C_{\chi}(s,\pi \eta)^{-1} = \gamma_{\eta}(s) \int\limits_{R'} \eta(r') B_{\varphi_{\kappa}}^G(\dot w_Gr',f) \prod\limits_{i=2}^n |r_i'|^{2s-n-2i+1} d^{\ast}r_i'  \]
for all $\kappa \in \mathbb Z$.
\end{S3}

\subsection{Bessel function asympotics}

What happens next is a detailed study of the asymptotics of the partial Bessel integrals $B_{\varphi}^G$.  This was done in a slightly different setting by Cogdell, Shahidi, and Tsai in Section 5 of [CoShTs].  Their argument encompasses forty pages of hard analysis.  Fortunately, our calculations turn out almost entirely identical to theirs.  Their field is $F$, our field is $E$.  Their use of the transpose $^tg$ must be replaced by the conjugate transpose $^t\overline{g}$, where $\overline{g}$ is the application of the nontrivial element of $\Gal(E/F)$.  We cite their main result, Proposition 3.9.1 (Proposition 5.7 of their paper), referring to their paper for an almost word for word identical proof.

Let $B(G)$ be the set of $w \in W(G)$ of the form $w_G w_M^{-1}$, where $M$ is a standard Levi subgroup of $G$, and $w_G$ and $w_M$ are the long elements of $G$ and $M$.  If $w, w' \in B(G)$, we have a notion $d_B(w,w')$ of Bessel distance ([ShCoTs], 5.1.4).  For $w' \in B(G)$, we set
\[ \Omega_{w'} = \bigcup\limits_{w \leq w'} BwB \]
where $\leq$ is the Bruhat order.  This union is open in $G$.  

For $w \leq w'$ in $B(G)$, with corresponding Levi subgroups $M \subseteq M'$, let $A_w$ and $A_{w'}$ be the centers of $M$ and $M'$.  Define the transverse torus $A_w^{w'} = A_w \cap M_{\textrm{der}}'$.  The group $A_w^{w'}A_{w'}$ is open and finite index in $A_w$, with $A_w^{w'} \cap A_{w'}$ finite.

We fix once and for all an auxiliary function $f_0 \in \mathscr C_c^{\infty}(G;\omega_{\pi})$ with $W^{f_0}(e) =1$.

\newtheorem{K1}{Proposition}[subsection]

\begin{K1} (Proposition 5.7 of [CoShTs]) Let $f$ be a matrix coefficient of $\pi$ with $W^f(e) = 1$.  Then there exists a function $f_{1,e} \in \mathscr C_c^{\infty}(G;\omega_{\pi})$, and for each $w' \in B(G)$ with $1 \leq d_B(w',e)$ a function $f_{1,w'} \in \mathscr C_c^{\infty}(\Omega_{w'};\omega_{\pi})$ such that:

(i): For all sufficiently large $\varphi = \varphi_{\kappa}$ (which is to say, for sufficiently large $\kappa$), we have

\[ B_{\varphi}^G(\dot w_Ga,f) = B_{\varphi}^G(\dot w_Ga,f_{1,e}) + \sum\limits_{1 \leq d_B(w',e)} B_{\varphi}^G(\dot w_Ga,f_{1,w'}) \]

for all $a \in A$.

(ii): $B_{\varphi}^G(\dot w_Ga,f_{1,e})$, as a function of $a$, depends only on the auxiliary function $f_0$ and on the central character $\omega_{\pi}$.

(iii): For sufficiently large $\varphi$, and for each $w' \in B(G)$ with $1 \leq d_B(w',e)$, we have that $B_{\varphi}^G(\dot w_G a, f_{1,w'}) = \omega_{\pi}(z) B_{\varphi}^G(\dot w_G bc', f_{1,w'})$ vanishes for $a$ outside $A_{w_G}^{w'}A_{w'}$ is uniformly smooth as a function of $c' \in A_{w'}'$.  
\end{K1}

Part (iii) of Proposition 3.9.1 requires some explanation.  The product $A_{w_G}^{w'}A_{w'}$ is open and of finite index in $A = A_{w_G}$, and one considers those $a \in A$ which decompose (in finitely many ways) as a product $zbc'$, for $b \in A_e^{w'}$, $z \in Z$, and $c' \in A_{w'}'$, where $A_{w'}' = A_{w'} \cap A'$.  A function $h$ of $c' \in A_{w'}'$ is called uniformly smooth if there exists an an open compact subgroup $H$ of $A_{w'}'$ such that $h(c_0c') = h(c')$ for all $c_0 \in H$ and $c' \in A_{w'}'$.  

\bigskip 

Now we complete the proof of Proposition 3.3.3, and hence the proof of Proposition 2.5.8.  We are given two irreducible supercuspidal representations $\pi_1$ and $\pi_2$ of $\mathbf M(F) = G$ with the same central character $\omega$.  Let $f_1, f_2 \in \mathscr C_c^{\infty}(G;\omega)$ be matrix coefficients of $\pi_1$ and $\pi_2$ with $W^{f_1}(e) = W^{f_2}(e) = 1$.  Fix an auxiliary function $f_0 \in \mathscr C_c^{\infty}(G;\omega)$ with $W^{f_0}(e) = 1$.  By Proposition 3.9.1, there exist functions $f_{1,e}, f_{2,e} \in \mathscr C_c^{\infty}(G;\omega)$ and for each $w' \in B(G)$ with $1 \leq d_B(w',e)$, functions $f_{1,w'}, f_{2,w'} \in \mathscr C_c^{\infty}(\Omega_{w'};\omega)$ such that the following hold:

\begin{itemize}
    \item For sufficiently large $\varphi = \varphi_{\kappa}$ (that is, for sufficiently large $\kappa$),
    
    \[ B_{\varphi}^G(\dot w_Ga',f_i) = B_{\varphi}^G(\dot w_Ga',f_{i,e}) + \sum\limits_{1 \leq d_B(w',e)} B_{\varphi}^G(\dot w_Ga', f_{i,w'}) \]
    
    for $i = 1, 2$ and all $a \in  A$.
    
    \item Each $B_{\varphi}^G(\dot w_Ga', f_{i,e})$ as a function of $a'$, only depends on the auxiliary function $f_0$ and on $\omega$.
    
    \item For sufficiently large $\varphi$, and each $w' \in B(G)$, with $1 \leq d_B(w',e)$, $B_{\varphi}^G(\dot w_Gbc', f_{i,w'})$ vanishes for $a$ outside $A_{w_G}^{w'}A_{w'}$ and is uniformly smooth as a function of $c' \in A_{w'}'$.
\end{itemize}

For the groups $A,A', A_w^{w'}$ etc. defined above, we define the subgroups $R, R', R_{w}^{w'}$ etc. as the corresponding ones with entries in $F$.  Note that $R'$ coincides with its earlier definition in Section 3.4.  We have for each character $\eta$ of $E^{\ast}$ such that $\omega \eta^n$ is ramified, a function $\gamma_{\eta}(s)$ depending only on the character $\omega \eta^n$ such that 

\[ C_{\chi}(s,\pi_i  \eta)^{-1} = \gamma_{\eta}(s) \int\limits_{R'}\eta(r') B_{\varphi_{\kappa}}^G(\dot w_Gr',f_i) \prod\limits_{i=2}^n |r_i'|^{2s-n-2i+1} d^{\ast}r_i'  \]

We will drop $\kappa$ from the notation and write $\varphi$ instead of $\varphi_{\kappa}$.  The difference $C_{\chi}(s,\pi_1  \eta)^{-1} - C_{\chi}(s,\pi_2 \eta)^{-1}$ is equal to 

\[ \gamma_{\eta}(s) \int\limits_{R'} ( B_{\varphi}^G(\dot w_Gr',f_1) - B_{\varphi}^G(\dot w_Gr',f_2)) \eta(r')\prod\limits_{i=2}^n |r_i'|^{2s-n-2i+1} d^{\ast}r_i' \]

For sufficiently large $\varphi$, we will have 

\begin{equation*}
    \begin{split}
        B_{\varphi}^G(\dot w_Gr',f_1) - B_{\varphi}^G(\dot w_Gr',f_2) & = B_{\varphi}^G(\dot w_Gr',f_{1,e}) - B_{\varphi}^G(\dot w_Gr',f_{2,e}) \\
        & + \sum\limits_{1 \leq d_B(w',e)} B_{\varphi}^G(\dot w_Gr', f_{1,w'}) - B_{\varphi}^G(\dot w_Gr', f_{2,w'})
    \end{split}
\end{equation*} 

Since $B_{\varphi}^G(\dot w_Gr',f_{1,e})$ and $B_{\varphi}^G(\dot w_Gr',f_{2,e})$ each only depend on the central character $\omega$ and on the auxiliary function $f_0$, these will cancel.  To complete the proof of the theorem, it suffices to show that for each $w' \in B(G)$ with $1 \leq d_B(w',e)$, we have

\[ \int\limits_{R'} (B_{\varphi}^G(\dot w_Gr', f_{1,w'}) - B_{\varphi}^G(\dot w_Gr', f_{2,w'})) \eta(r') \prod\limits_{i=2}^n |r_i'|^{2s-n-2i+1} d^{\ast}r' = 0 \]

Now each $B_{\varphi}^G(\dot w_Ga', f_{i,w'})$ vanishes for $a' \not\in A_{w_G}^{w'} A_{w'}$.  Since $A_{w_G}^{w'} A_{w'}$ is open in $A$, $A_{w_G}^{w'} A_{w'} \cap R'$ is open in $R'$, so the given integral is equal to integration over $A_{w_G}^{w'} A_{w'} \cap R'$.  We write

\[ \int\limits_{A_{w_G}^{w'} A_{w'} \cap R'} d^{\ast}r_i' = \int\limits_{A_{w_G}^{w'} A_{w'} \cap R'/R_{w'}'} \int\limits_{R_{w'}'} \, d^{\ast}c_i' d \bar x \]

For each $\bar x \in A_{w_G}^{w'} A_{w'} \cap R'/R_{w'}'$, we choose a representative $bd$ with $b \in A_{w_G}^{w'}$ and $d \in A_{w'}$.  Write $d= zd_1'$ for $z \in Z_M$ and $d_1' \in A_{w'}'$.  The integral which we are to show is zero is equal to 

\begin{equation*}
    \begin{split}
        \int\limits_{A_{w_G}^{w'} A_{w'} \cap R'/R_{w'}'} \prod\limits_{i=2}^n |b_id_i|^{2s-n-2i+1} \eta(bd) \omega(z) \\
    \int\limits_{R_{w'}'} (B_{\varphi}^G(\dot w_Gbd'c', f_{1,w'}) - B_{\varphi}^G(\dot w_Gbd'c', f_{2,w'})) \eta(c') \prod\limits_{i=2}^n |c_i'|^{2s-n-2i+1} \, d^{\ast} c_i' \, d \bar x
    \end{split}
\end{equation*}

By uniform smoothness, there exists an open compact subgroup $H$ of $A_{w'}'$, depending on $b$, such that for all $c'' \in A_{w'}'$, and all $c''' \in H$, 

\[B_{\varphi}^G(\dot w_Gbc''c''', f_{i,w'}) = B_{\varphi}^G(\dot w_Gbc''c''', f_{i,w'}) \]

It follows that, for fixed $b \in A_{w_G}^{w'},$ and $d' \in A_{w'}'$, $B_{\varphi}^G(\dot w_Gbd'c', f_{i,w'})$ is uniformly smooth with respect to $c' \in R_{w'}'$ (taking $H \cap R_{w'}'$ as the open compact subgroup).  This is to say,

\[B_{\varphi}^G(\dot w_Gbd'c'c'', f_{i,w'}) = B_{\varphi}^G(\dot w_Gbd'c', f_{i,w'})\]

for all $c' \in R_{w'}'$, and $c'' \in R_{w'} \cap H$.  Taking $\eta$ to be sufficiently highly ramified so that it is nontrivial on some $c'' \in R_{w'} \cap H$, we see by changing $c' \mapsto c''^{-1}c'$ that
\[\int\limits_{R_{w'}'} (B_{\varphi}^G(\dot w_Gbd'c', f_{1,w'}) - B_{\varphi}^G(\dot w_Gbd'c', f_{2,w'})) \eta(c') \prod\limits_{i=2}^n |c_i'|^{2s-n-2i+1} \, d^{\ast} c_i' = 0 \]

Taking $\eta$ to be sufficiently highly ramified to simultaneously deal  with all the $w' \in B(G)$ with $1 \leq d_B(w',e)$, we get $C_{\chi}(s, \pi_1 \eta)^{-1} - C_{\chi}(s,\pi_2  \eta)^{-1} = 0$.  This completes the proof of Proposition 3.3.3, and hence the proof of Proposition 2.5.8.  

\section*{Acknowledgements}

I would like to thank my advisor, Freydoon Shahidi, for originally recommending this problem to me, and for his many insightful ``big picture'' observations which helped me through several sticking points.  I would also like to thank James Cogdell, David Goldberg, Guy Henniart, and Chung Pang Mok for several helpful discussions on various technical points.  I would like to thank my friend Dongming She for his encouragement and support, and for his willingness to carefully check the details of my proof.

My research was supported by NSF grant DMS-1500759.

\end{document}